\pgfplotsset{
  width=.65\linewidth,
  axis background/.style={fill=black!5!white},
  grid style={densely dotted,semithick},
  legend style={
    legend columns=1,
    legend pos=outer north east
  },
  compat=1.16,
}
\tikzset{
  cross/.pic = {
    \draw[rotate = 45] (-#1,0) -- (#1,0);
    \draw[rotate = 45] (0,-#1) -- (0, #1);
  }
}
\providecommand{\CBDV}{C_{\textsc{\scriptsize BDV}}}
\providecommand{\gen}{{{\tt gen}_N}}
\providecommand{\genN}{{\tt gen}_N}
\providecommand{\genNsharp}{{\tt gen}^{\sharp}_N}
\providecommand{\level}{{{\tt lvl}}}
\providecommand{\levelN}{{\tt lvl}_N}
\providecommand{\neig}{\texttt{Neigh}}
\providecommand{\gensharp}{{\genNsharp}}
\providecommand{\levelNsharp}{{\level^\sharp}}
\providecommand{\simplex}[1]{\llbracket{#1}\rrbracket}
\providecommand{\maubach}[1]{[{#1}]}
\providecommand{\type}{{{\tt type}_N}}
\providecommand{\typeN}{{\tt type}_N}
\providecommand{\tria}{\mathcal{T}}
\providecommand{\Bisec}{{\tt Bisec}}
\providecommand{\bisec}{{\Bisec}} 
\providecommand{\BisecT}{\Bisec(\tria_0)}
\providecommand{\BisectionClosure}{{\Refine}}
\providecommand{\Refine}{{\tt Refine}}
\providecommand{\mtree}{\mathbb{T}}
\providecommand{\vertices}{\mathcal{V}}
\providecommand{\nodes}{\vertices}
\providecommand{\edges}{\mathcal{E}}
\providecommand{\bse}{\texttt{bse}}
\providecommand{\bsv}{\texttt{bsv}}
\providecommand{\dist}{\textup{dist}}
\begin{document}

\author[L.\ Diening]{Lars Diening}
\author[L.\ Gehring]{Lukas Gehring}
\author[J.\ Storn]{Johannes Storn}

\address[L.\ Diening]{Department of Mathematics, Bielefeld University, Postfach 10 01 31, 33501 Bielefeld, Germany}
\email{lars.diening@uni-bielefeld.de}
\address[L.\ Gehring]{Department of Mathematics, Friedrich-Schiller-Universität Jena, Ernst-Abbe-Platz 2, 07743 Jena, Germany}
\email{lukas.gehring@uni-jena.de}
\address[J.\ Storn]{Faculty of Mathematics \& Computer Science, Institute of Mathematics, Leipzig University, Augustusplatz 10, 04109 Leipzig, Germany}
\email{johannes.storn@uni-leipzig.de}
\thanks{
The work of Lars Diening and Johannes Storn was supported by the Deutsche Forschungsgemeinschaft (DFG, German
Research Foundation) – SFB 1283/2 2021 – 317210226. Lukas Gehring received funding from the European Union's Horizon 2020 research and innovation programme (Grant agreement
No. 891734) and inspiration from the almighty God from whom all ideas issue.}

\subjclass[2020]{
65N50, 
 65Y20. 
}
\keywords{bisection, closure estimate, newest vertex bisection, AFEM, shape regularity}

\title{Adaptive Mesh Refinement for arbitrary initial Triangulations}

\begin{abstract}
  We introduce a simple initialization of the Maubach bisection routine for adaptive mesh refinement which applies to any conforming initial triangulation and terminates in linear time with respect to the number of initial vertices. 
We show that Maubach's routine with this initialization generates meshes that preserve shape regularity and satisfy the closure estimate needed for optimal convergence of adaptive schemes. 
Our ansatz allows for the intrinsic use of existing implementations.
\end{abstract}

\maketitle
\section{Introduction}
\label{sec:introduction}
Adaptive mesh refinements are of uttermost importance for efficient finite element approximations of partial differential equations that exhibit singular solutions.
Starting with the seminal contributions \cite{BinevDahmenDeVore04,Stevenson07} a significant amount of papers investigated and verified the optimal convergence of such schemes, see
\cite{CarstensenFeischlPagePraetorius14} for an overview. 
These schemes base on the so-called adaptive finite element loop. This loop solves the discrete problem, computes local error estimators, marks simplices with large error contributions either by the D\"orfler criterion \cite{Doerfler96} or a maximum marking strategy  \cite{DieningKreuzerStevenson16}, and refines the underlying triangulation locally. 
A key property needed in all optimal convergence results is the closure estimate of the mesh refinement routine displayed in Theorem~\ref{thm:closuregcolored}. This estimate bounds the number of newly created simplices by the accumulated number of marked simplices.
Such estimates have been obtained in \cite{BinevDahmenDeVore04} for two and in \cite{Stevenson08} for higher dimensions for the newest vertex bisection \cite{Mitchell91} and its generalization to higher dimensions by Kossaczký, Maubach, and Traxler \cite{Kossaczky94,Maubach95,Traxler97}. However, the results require an initial condition that is in dimension $n\geq 3$ rather restrictive in the sense that there the initial conditions cannot be satisfied for general triangulations. 
We overcome this drawback by a novel initialization algorithm, while the bisection routine of each single simplex remains the one of~\cite{Maubach95,Traxler97}. Our novel ansatz leads to the following advantages.
\begin{itemize}
\item It applies to any initial triangulation and dimension $n\geq 2$.
\item It preserves shape regularity which is important for the convergence of finite element schemes~\cite{BabAzi76,Oswald15}.
\item It allows to use existing implementations of the routines in \cite{Maubach95,Traxler97} and leads to the same similarity classes of simplices.
\item It avoids additional initial refinements as used in \cite{Kossaczky94,Stevenson08}.
\item The costs of the initialization in Algorithm~\ref{algo:coloring} are linear in the number of initial vertices. 
\item Full uniform refinements ($n$ bisections of each simplex) of the initial triangulation are conforming.
\item It satisfies the closure estimate of Binev--Dahmen--DeVore, see Theorem~\ref{thm:closuregcolored}.
\end{itemize}
Our extension is motivated by one of the author's master thesis \cite{Gehring}. It relies on the observation that any initial triangulation $\tria_0$ in $\mathbb{R}^n$ can be seen as a collection of faces of a triangulation $\tria_0^+$ in $\mathbb{R}^N$ with $n \leq N$ that has suitable initial conditions.
We obtain such a triangulation $\tria_0^+$  by assigning a color to each vertex in $\tria_0$ such that the colors of vertices connected by an edge are different and by adding vertices with the remaining colors to each simplex. 
This extension provides the generation structure exploited in \cite{DieningStornTscherpel23}. This structure applies to subsimplices and so in particular to simplices in $\tria_0$ and their descendants. We use this property to introduce a new notion of generation for simplices. With this notion of generation the closure estimate follows, after overcoming some technical difficulties, by arguments similar to the ones of Binev, Dahmen, DeVore \cite{BinevDahmenDeVore04} and Stevenson \cite{Stevenson08}. We are able to bound the involved equivalence constants in terms of the number of colors $N$, which is limited by the maximal number of initial edges connected to an initial vertex, see Lemma~\ref{lem:largestColor}. 
Notice that the triangulation $\tria_0^+$ is a theoretical tool. Neither do we construct $\tria_0^+$ nor does our refinement routine depend on $\tria_0^+$. In fact, the colors assigned to each simplex provide an order of the vertices which then allows for the application of the bisection routines of Maubach \cite{Maubach95} and Traxler \cite{Traxler97}.
\section{Novel Initialization}
This section introduces our novel initialization for the Maubach routine. 
The definition of this bisection routine uses the notion of simplices.
A $k$-simplex with $0\leq k \leq n$ is the convex hull of $k+1$ affinely independent points~$v_0,\dots, v_k\in \RRn$ and is denoted by $[v_0,\dots,v_k]$. In particular, a 0-simplex is a vertex and a 1-simplex is an edge.
The points $v_0,\dots, v_k$ are called vertices of the $k$-simplex~$S = [v_0,\dots, v_k]$ and we denote the set of all vertices by $\nodes(S) = \lbrace v_0,\dots,v_k\rbrace$. Moreover, $\edges(S) \coloneqq \lbrace [v_i,v_j]\colon i,j=0,\dots,k$ and $i\neq j\rbrace$ denotes the set of all edges in $S$.
If we have a set of simplices $\mathcal{P}$, we denote by $\nodes(\mathcal{P}) \coloneqq \bigcup_{S\in \mathcal{P}} \nodes(S)$ and $\edges(\mathcal{P}) \coloneqq \bigcup_{S\in \mathcal{P}} \edges(S)$ the union of its vertices and edges.
Maubach's routine additionally requires a so-called tag $\gamma \in \lbrace 1,\dots,n\rbrace$ associated to each simplex. 
The tagged simplices are bisected according to Algorithm~\ref{algo:maubach}. The routine determines the bisection edge $\bse(T)$ of a given $n$-simplex $T$ and creates two new simplices containing the midpoint $\bsv(T) \coloneqq \textup{mid}(\bse(T))$ of the bisection edge as new vertex.
\begin{figure}
\begin{algorithm}[H]
  \caption{Bisection of $n$-simplex \cite{Maubach95}}
  \label{algo:maubach}
  \SetAlgoLined%
  \SetKwFunction{FuncBisection}{Bisection}
  \FuncBisection{$T$}{%

    \KwIn{A tagged $n$-simplex~$T=\maubach{v_0, \dots, v_n}_\gamma$ with tag~$\gamma \in \lbrace 1,\dots,n\rbrace$}
    \KwOut{Two children $n$-simplices $T_1$ and $T_2$}    
    
    Set $\gamma' \coloneqq \gamma  - 1$ if $\gamma \geq 2$ and $\gamma' \coloneqq n$ else\tcp*{new tag}
    Set $\bse(T) \coloneqq [v_0,v_\gamma]$\tcp*{bisection edge}
    Set $v' \coloneqq \bsv(T)  \coloneqq \textup{mid}(\bse(T)) \coloneqq  (v_0+v_\gamma)/2$\tcp*{bisection vertex}
    
   	\Return $T_1 \coloneqq \maubach{ v_0, v_1, \dots, v_{\gamma-1}, v', v_{\gamma+1}, \dots,v_n}_{\gamma'}$ and $T_2 \coloneqq \maubach{ v_1,v_2,\ldots, v_\gamma,v', v_{\gamma+1}, \dots,v_n}_{\gamma'}
    $
  }
\end{algorithm}
\end{figure}
This iterative routine leads to the question how to define the order of vertices and the tag in initial simplices. We answer this question by an initialization algorithm that relies on a coloring for vertices in the initial triangulation $\tria_0$ with vertices $\nodes(\tria_0)$. 
\begin{definition}[$N{+}1$-Coloring]
  \label{def:gcoloring}
  We call the pair~$(\tria_0,\frc)$ an \emph{$N{+}1$-colored} triangulation, if $\tria_0$ is a triangulation
   in~$\RRn$ and $\frc\colon \vertices(\tria_0) \to \lbrace 0 ,\dots,N\rbrace$ is a mapping with~$N \geq n$ such that for each simplex $T\in \tria_0$ the colors of its vertices $\nodes(T)$ are distinct.
\end{definition}
With such a coloring we define the order and tag of initial simplices as follows.
\begin{definition}[Maubach initialization]\label{def:MaubachInit}
Let $\tria_0$ be an initial triangulation with $N{+}1$-coloring $\frc$. We sort the vertices $v_0,\dots,v_n$ satisfying $\frc(v_j) < \frc(v_{j+1})$ for all $j = 0,\dots,n-1$ of each initial tagged simplex $T\in \tria_0$ such that
\begin{equation}\label{eq:MaubachSorting}
T = \begin{cases}
[v_n, v_0, v_1,\dots,v_{n-1}]_n&\text{if }\frc(v_n) = N,\\
[v_0,v_1,\dots,v_{n}]_n & \text{else}.
\end{cases}
\end{equation}
\end{definition}
\begin{remark}[Alternative sorting]\label{rem:alternativeSorting}
By redefining the colors $\frc'(v_j) \coloneqq \frc(v_j) + 1$ for $j=0,\dots,N-1$ and $\frc'(v_N) \coloneqq 0$ we can avoid the two cases in \eqref{eq:MaubachSorting} in the sense that we sort the vertices in $T$ according to the color $\frc'(v_j) < \frc(v_{j+1})$ for all $j=0,\dots,n$. Definition~\ref{def:MaubachInit} has the advantage that it is consistent with the notion of coloring in \cite{DieningStornTscherpel23} that provides results needed in our proofs later.
\end{remark}
\begin{remark}[Traxler]\label{rem:Traxler}
The initialization with the equivalent Traxler bisection routine (\cite[Sec.~3]{Traxler97} and \cite[Sec.~2]{Stevenson08}) uses the same sorting of vertices as in \eqref{eq:MaubachSorting} and the simplex type $\gamma = 0$.
\end{remark}
Before we state the resulting routine's properties, let us introduce the notion of conforming triangulations and the bisection routine with closure.
\begin{definition}[Triangulation]
  \label{def:triangulation}
  Let $\tria$ be a collection of closed $n$-simplices $T\subset \RRn$ with pairwise disjoint interior.
Such a partition $\tria$ is called \emph{(conforming) triangulation}, 
if the intersection of any two $n$-simplices~$T_1,T_2 \in \tria$ is either empty or an $m$-subsimplex of both~$T_1$ and $T_2$ with $m \in \set{0,\dots, n}$.
\end{definition}
For a regular triangulation $\tria_0$  the colors of all vertices of each $T\in\tria_0$ are distinct, if and only if the colors of the vertices of each edge $e\in\edges(\tria_0)$ are distinct. This motivates Algorithm~\ref{algo:coloring} which assigns a color to each vertex in $\tria_0$ such that the assumption in Definition~\ref{def:gcoloring} is satisfied. 
The complexity of Algorithm~\ref{algo:coloring}  is linear in the number of initial vertices and the following lemma shows that the resulting number $N$ of colors is limited by the maximal degree of a vertex in $\tria_0$, that is, the number of edges sharing the same vertex
$
\# \edges_0(v) \coloneqq \# \lbrace e \in \edges(\tria_0)\colon v\in e\rbrace$ with $v\in \vertices(\tria_0)$.
\begin{lemma}[Largest color]\label{lem:largestColor}
Let the coloring $\frc$ result from Algorithm~\ref{algo:coloring}. Then the
maximal degree bounds the largest color in the sense that
\begin{align*}
N  \coloneqq \max_{v\in \vertices(\tria_0)} \frc(v)  \leq \max_{v\in \vertices(\tria_0)} \# \edges_0(v).
\end{align*}
\end{lemma}
\begin{proof}
It follows by induction that the smallest number in $\mathbb N_0\setminus \left\{\frc(w)~\middle|~[v,w]\in\edges(\tria_0)\right\}$ is at most $\#\edges_0(v)$. This observation yields the lemma.
\end{proof}
\begin{figure}
\begin{algorithm}[H]
  \label{algo:coloring}
  \caption{Generalized coloring with $N{+}1$ colors}
  \SetAlgoLined%
  \SetKwFunction{FuncBisection}{SetColor}
  \FuncBisection{$\tria_0$}{%
    
    \KwIn{Conforming triangulation $\tria_0$}
    
    \KwOut{Generalized coloring $\frc\colon \vertices(\tria_0)\to \lbrace 0 ,\dots, N\rbrace$}

	Set $\frc(v) \coloneqq \infty$ for all $v\in \vertices(\tria_0)$\;
	\ForEach{$v\in \vertices(\tria_0)$}
	{Set $\frc(v) \coloneqq \min\left(\mathbb{N}_0\setminus\lbrace \frc(w)\colon [v,w]\in\edges(\tria_0) \rbrace\right)$
	\,\tcp*{smallest color not already attained by a neighboring vertex}}
	Set $N\coloneqq \max \lbrace \frc(v)\colon v\in \vertices(\tria_0)\rbrace$\;  
  }
\end{algorithm}
\end{figure}
%

In order to guarantee that bisections of simplices in conforming triangulations do not create hanging vertices, it is necessary to apply the conforming closure displayed in Algorithm~\ref{algo:closure-recursive}.
\begin{figure}
\begin{algorithm}[H]
  \caption{Bisection with conforming closure (recursive)}\label{algo:closure-recursive}
  \SetAlgoLined%
  \SetKwFunction{FuncBisect}{Refine}
  \FuncBisect{$\tria, T$}{
    
    \KwIn{Conforming triangulation $\tria$ and an $n$-simplex~$T\in \tria$}
    \KwOut{Coarsest conforming refinement~$\tria'$ of $\tria$ where $T$ is bisected}
    
    Set $e\coloneqq\bse(T)$ and $\omega_\tria(e) \coloneqq \{S\in \tria~|~e\in\edges(S)\}$\;

    \If{there exists a $T'\in\omega_\tria(e)$ with $\bse(T')\neq e$}{\Return \FuncBisect{\FuncBisect{$\tria,T'$},$T$}\tcp*{flag $T'$ for refinement}}
    
    \Else{\Return $\tria\setminus\omega_\tria(e)\cup\bigcup_{T'\in \omega_\tria(e)}\texttt{Bisection}(T')$\tcp*{conforming mesh}}
  }  
\end{algorithm}
\end{figure}
%
Notice that there are non-recursive formulations that, in contrast to Algorithm~\ref{algo:closure-recursive}, require weaker initial conditions in order to terminate.
However, the recursive formulation has analytical advantages which we exploit in Lemma \ref{lem:RefChainsNew} below. 
If the recursive algorithm terminates, these closure routines are equivalent. 

Let $\BisecT = \Bisec(\tria_0,\frc)$ denote the set of all triangulations that can be obtained by successive applications of Algorithm~\ref{algo:closure-recursive} to some $N{+}1$-colored initial triangulation $(\tria_0,\frc)$ with initialization as introduced in Definition~\ref{def:MaubachInit}. We denote the set of possible simplices by $\mtree \coloneqq \bigcup \BisecT$.
Let $d(T)$ and $D(T)$ denote the diameters of the largest inscribed and smallest ball including $T\in \mtree$, respectively. We define the shape-regularity of a simplex $T\in \mtree$ by 
 \begin{equation}\label{eq:DefShapeReg}
\gamma(T) \coloneqq D(T)/d(T) > 1.
\end{equation}
\begin{theorem}[Basic properties]
  \label{thm:basic-properties}
  Let $(\tria_0,\frc)$ be an $N{+}1$-colored initial triangulation with initialization as in Definition~\ref{def:MaubachInit}.
  \begin{enumerate}
  \item \label{itm:basic-colored-terminate}%
    The bisection algorithm defined by Algorithms~\ref{algo:maubach} and~\ref{algo:closure-recursive} terminates for each $\tria\in \BisecT$ and marked simplex $T\in \tria$.
  \item \label{itm:basic-colored-shape}%
There are at most $n!\, n\, 2^{n-2}$ classes of similar simplices in $\mtree$ for each $T_0 \in \tria_0$. Moreover, the simplices in $\mtree$ are shape-regular in the sense that their shape regularity~$\gamma$ defined in \eqref{eq:DefShapeReg} satisfies with constant $C_{sr} \coloneqq 2n (n+\sqrt{2}-1 )$
\begin{align*}
\gamma(T) \leq C_{sr} \gamma(T_0) \qquad\text{for all }T\in \mtree\text{ with ancestor }T_0 \in \tria_0.
\end{align*}    
  \item \label{itm:basic-colored-uniform}
    Consecutive uniform full refinements of $\tria_0$, that are $n$ successive bisections of all simplices, are conforming.
  \item 
    \label{itm:basic-colored-lattice} $(\BisecT, \vee, \wedge)$ is a distributive ordered lattice, where $\tria_1 \vee \tria_2$ is the coarsest common refinement and $\tria_1 \wedge \tria_2$ is the finest common coarsening of~$\tria_1$ and $\tria_2$.
  \end{enumerate}
\end{theorem}
We verify these properties in the following section (and the upper bound in \ref{itm:basic-colored-shape} in the appendix). 
Moreover, we verify a closure estimate which reads as follows.

The AFEM loop \texttt{Solve}--\texttt{Estimate}--\texttt{Mark}--\texttt{Refine} generates a sequence of triangulations~$\mathcal{T}_k \in \BisecT$ in the following iterative way. After calculating the finite element solution solution with underlying triangulation~$\mathcal{T}_k$, error indicators lead to a set~$\mathcal{M}_k \subset \mathcal{T}_k$ of marked $n$-simplices. A bisection of all those $n$-simplices with conformal closure generates a new triangulation~$\mathcal{T}_{k+1}$, that is,
\begin{align*}
\tria_{k+1} = \BisectionClosure(\tria_k,\mathcal M_k) \coloneqq \bigvee_{T\in \mathcal{M}_k} \BisectionClosure(\tria_k,T).
\end{align*}
The proofs of optimal convergence (see \cite{CarstensenFeischlPagePraetorius14} for an axiomatic approach) require the control of the effect of the conforming closure in Algorithm~\ref{algo:closure-recursive}, displayed in the following Theorem~\ref{thm:closuregcolored}. The result has been proven for $n=2$ in \cite{BinevDahmenDeVore04} and $n \geq 2$ in~\cite{Stevenson08} under additional assumptions on the initial triangulation, see the end of Section~\ref{subsec:gener-color-init} for a discussion.
\begin{theorem}[Closure estimate]
  \label{thm:closuregcolored}
  Let $\tria_0$ be an $N{+}1$-colored triangulation, let $\mathcal{M}_\ell\subset \tria_\ell$ denote the sets of marked elements in an AFEM loop with $\mathcal{T}_{\ell+1} = \BisectionClosure(\tria_\ell,\mathcal{M}_\ell)$. 
  There exists a constant $\CBDV < \infty$ with
  \begin{equation}\label{eq:ClosureEstThm}
      \#\tria_L - \# \tria_0 \leq \CBDV\, \sum_{\ell=0}^{L-1} \# \mathcal{M}_\ell\qquad\text{for all }L \in \mathbb{N}.
  \end{equation}
We have $\CBDV \leq C N^{n}$, where $C<\infty$ is a constant that depends on $n$, the shape regularity of $\tria_0$, and the quasi-uniformity $\max_{T,T'\in \tria_0} |T|/|T'|$ of $\tria_0$, but not on $N$.
\end{theorem}
Notice that both results, Theorem~\ref{thm:basic-properties} and \ref{thm:closuregcolored}, are known for ``suitable'' initial triangulations $\tria_0$. The main advantage of our approach is that these properties extend to any initial triangulation.
\section{Theoretical Investigation of the novel Initialization}
The key idea to extend the results in Theorem~\ref{thm:basic-properties} and \ref{thm:closuregcolored} from ``suitable'' triangulations to arbitrary initial triangulations is the following. We relate our initial $N{+}1$-colored triangulation $(\tria_0,\frc)$ in $\mathbb{R}^n$ to some triangulation $\tria_0^+$ of some higher-dimensional domain in $\mathbb{R}^N$ with ``suitable'' initial conditions. We then use an established structure given by the higher-dimensional triangulation $\tria_0^+$ to define some structure for triangulations in $\BisecT$ which finally leads to the statements in Theorem~\ref{thm:basic-properties} and \ref{thm:closuregcolored}. Section~\ref{subsec:color-init-triang} introduces and discusses the structure for such ``suitable'' initial conditions. 
Thereby, we introduce and motivate the notions of generation, level, and type needed in the following proofs and discuss relations to existing results.
Section~\ref{subsec:gener-color-init} extends this structure to $(\tria_0,\frc)$ with $N{+}1$-coloring $\frc$.
We use this structure in Section~\ref{subsec:ClosureEstimate} to verify the theoretical results.
\subsection{Initial Coloring with $N = n$}
\label{subsec:color-init-triang}
Typical assumptions on $\tria_0$ in the literature like Traxler's \emph{reflected domain partition} condition~\cite[Sec.~6]{Traxler97} can be rewritten as an $n{+}1$-coloring, that is, $N=n$ in Definition~\ref{def:gcoloring}.
In the remainder of this subsection we assume that~$(\tria_0,\frc)$ has such a coloring $\frc$ with $N{+}1=n{+}1$ colors, 
that is, the vertices in
each initial simplex $T = \maubach{v_N,v_0,\dots,v_{N-1}}_N \in \tria_0$ satisfy
\begin{equation}
  \label{eq:maubach-color}
  \frc (v_j) = j\qquad\text{for all }j=0,\dots,N.
\end{equation}
Under this assumption it is known (cf.~\cite[Sec.~4]{Traxler97}) that consecutive uniform refinements of $\tria_0$ (bisections of each $N$-simplex) are conforming. 
The property that consecutive uniform refinements are conforming is equivalent to Stevenson's matching neighbor condition \cite[Thm.~4.3 and Rem.~4.4]{Stevenson08}. For Stevenson's more general assumption Theorem~\ref{thm:basic-properties} is well known. In particular, the properties~\ref{itm:basic-colored-terminate} and \ref{itm:basic-colored-uniform} can be found in~\cite{Maubach95,Traxler97,Stevenson08} and~\ref{itm:basic-colored-lattice} in \cite{DieningKreuzerStevenson16,DieningStornTscherpel23}.
The statement about the similarity classes in \ref{itm:basic-colored-shape} can be found in \cite[Thm.~4.5]{AMP.2000}. We verify the shape regularity result with explicit constant $C_{sr}$ in \ref{itm:basic-colored-shape} in the appendix.
Moreover, the closure estimate in Theorem~\ref{thm:closuregcolored} has been shown for $N=2$ in \cite{BinevDahmenDeVore04} and $N \geq 2$ in~\cite{Stevenson08}. 
Additionally, the restrictive initial conditions allow for a mesh-grading result, see \cite[Thm.~1.2]{DieningStornTscherpel23}.
The proofs in \cite{DieningStornTscherpel23} exploit some fine properties of~$\BisecT$.
Since these properties are important for our proofs in Section~\ref{subsec:ClosureEstimate}, we introduce and explain them in the following. 

It is quite standard, e.g.~\cite{BinevDahmenDeVore04}, to assign to each $N$-simplex~$T \in \mtree$ a generation. For each $T \in \tria_0$ we set~$\gen(T) \coloneqq 0$. If the bisection of~$T$ creates children~$T_1$ and $T_2$, then $\gen(T_1) \coloneqq \gen(T_2) \coloneqq \gen(T)+1$. However, it was shown in~\cite{DieningStornTscherpel23} that for $N{+}1$-colored initial partitions~$(\tria_0,\frc)$ with $N=n$ it is even possible to assign a generation to each vertex~$v \in \vertices(\mtree)$ that is compatible with the generation of $N$-simplices in the sense
\begin{equation}
  \label{eq:GenMaxVertices}
  \gen(T) = \max_{v\in \vertices(T)} \gen(v)\qquad\text{for all } T \in \mtree.
\end{equation}
For this they defined $\gen(v) \coloneqq -\frc(v)$ for each initial vertex $v \in \vertices(\tria_0)$, where $\frc$ is the color map of~$\tria_0$. Then the generation of each bisection vertex $\bsv(T)$ resulting from the bisection of a simplex $T\in \mtree$ is set to $\gen(\bsv(T)) \coloneqq \gen(T) +1$. 
Since for colored initial partitions uniform refinements are conforming \cite{Traxler97}, this definition is well-posed. An induction reveals that the generations of vertices within a simplex are distinct.
Thus, we can sort the vertices of each $m$-subsimplex $S = [v_0,\dots,v_m]$ with $m\leq N$ by decreasing generations. For this we use the notation
\begin{equation}
  \label{eq:sorted}
  S = \simplex{v_0,\dots, v_m}\qquad\text{whenever } \gen(v_0) > \gen(v_1) > \dots > \gen(v_m)
\end{equation}
and assign, using the formula in~\eqref{eq:GenMaxVertices}, the generation
\begin{equation}
  \label{eq:GenMaxVerticesSub}
  \gen(S) \coloneqq \gen(v_0) = \max_{v\in \vertices(S)} \gen(v).
\end{equation}
Based on the generations, \cite[Sec.~3.3]{DieningStornTscherpel23} derives the bisection rule in Algorithm~\ref{algo:subsimplex} that agrees with the bisection rule of Algorithm~\ref{algo:maubach} and can additionally be applied to $m$-subsimplices with $m = 1,\dots, N$ in the sense that it generates $m$-subsimplices of descendants of an initial $N$-simplex.
The algorithm involves the notion of levels~$\levelN(\bigcdot) \in \setZ$ and types~$\type(\bigcdot) \in \set{1,\dots, N}$, which are defined for vertices and $m$-simplices by
\begin{equation}
  \label{eq:leveltype}
  \gen(\bigcdot) = N (\levelN(\bigcdot)-1) + \type(\bigcdot).
\end{equation}
\begin{figure}
\begin{algorithm}[H]
  \label{algo:subsimplex}
  \caption{Bisection of $m$-subsimplex in dimension~$N$}
  \SetAlgoLined%
  \SetKwFunction{FuncBisection}{Bisection}
  \FuncBisection{$S$}{%
    
    \KwIn{An $m$-simplex $S=\simplex{v_0, \dots, v_m}$ sorted by decreasing vertex generation with $1\leq m \leq N$}
   
    \KwOut{Bisection edge $\bse(S)$ as well as generation of the bisection vertex~$\bsv(S) = \textup{mid}(\bse(S))$}

    \eIf{$\levelN(v_m) \neq \levelN(v_{m-1})$}{
      $\bse(S) =\simplex{v_{m-1}, v_m}$\tcp*{Two oldest vertices}

      $\gen(\bsv(S)) = \gen(v_{m-1}) + N$\;
    }{
      $j \coloneqq \min \set{k\colon \levelN(v_k) = \levelN(v_m)}$\;
      
      $\bse(S)=\simplex{v_j,v_m}$\tcp*{Youngest and oldest vertex of old level}

      $\gen(\bsv(S)) = \gen(v_m) + 2N+1 - \type(v_j)$\;
    }
  }
\end{algorithm}
\end{figure}%

We conclude this subsection with commenting on the restrictions of the assumption of a $n{+}1$-colorable initial triangulations.
\begin{remark}[Colorability]
  \label{rem:colorability}
Not every initial triangulation~$\tria_0$ can be $n{+}1$-colored.  A necessary condition for $n=2$ is that every interior vertex has an even number of simplices sharing this vertex, see Figure~\ref{fig:noncolorable}. 
\begin{figure}
  \centering
  \begin{minipage}{.45\textwidth}
    \centering
    \tdplotsetmaincoords{62}{33}
\begin{tikzpicture}[scale = 2,tdplot_main_coords]
\fill (0,0,0) circle (0.3pt) node[above] {$0$};
\fill (1.0,0.0,0.0) circle (0.3pt) node[right] {$2$};
\fill (0.30901699437494745,0.9510565162951535,0.0) circle (0.3pt) node[right] {$1$};
\fill (-0.8090169943749473,0.5877852522924731,0.0) circle (0.3pt) node[above] {$2$};
\fill (-0.8090169943749473,-0.5877852522924731,0.0) circle (0.3pt) node[left] {$2$};
\fill (0.3090169943749473,-0.9510565162951535,0.0) circle (0.3pt) node[below] {$1$};
\fill (-1.1,0.25) circle (0.3pt) node[left] {$1$};

\draw (0,0,0) -- (0,0,0);
\draw (0,0,0) -- (1.0,0.0,0.0);
\draw (0,0,0) -- (0.30901699437494745,0.9510565162951535,0.0);
\draw (0,0,0) -- (-0.8090169943749473,0.5877852522924731,0.0);
\draw (0,0,0) -- (-1.1,0.25);
\draw (0,0,0) -- (-0.8090169943749475,-0.587785252292473,0.0);
\draw (0,0,0) -- (0.3090169943749473,-0.9510565162951535,0.0);
\draw (1.0,0.0,0.0) -- (0.30901699437494745,0.9510565162951535,0.0);
\draw (1.0,0.0,0.0) -- (0.3090169943749473,-0.9510565162951535,0.0);
\draw (0.30901699437494745,0.9510565162951535,0.0) -- (-0.8090169943749473,0.5877852522924731,0.0);
\draw (-0.8090169943749473,0.5877852522924731,0.0) -- (-1.1,0.25);
\draw (-1.1,0.25) -- (-0.8090169943749475,-0.587785252292473,0.0);
\draw (-0.8090169943749475,-0.587785252292473,0.0) -- (0.3090169943749473,-0.9510565162951535,0.0);
\end{tikzpicture}

  \end{minipage}
  \begin{minipage}{.45\textwidth}
    \centering
    \tdplotsetmaincoords{62}{33}
\begin{tikzpicture}[scale = 2,tdplot_main_coords]
\fill (0,0,0) circle (0.3pt) node[above] {$0$};
\fill (1.0,0.0,0.0) circle (0.3pt) node[right] {$2$};
\fill (0.30901699437494745,0.9510565162951535,0.0) circle (0.3pt) node[right] {$1$};
\fill (-0.8090169943749473,0.5877852522924731,0.0) circle (0.3pt) node[above] {$2$};
\fill (-0.8090169943749473,-0.5877852522924731,0.0) circle (0.3pt) node[left] {\textup{?}};
\fill (0.3090169943749473,-0.9510565162951535,0.0) circle (0.3pt) node[below] {$1$};

\draw (0,0,0) -- (0,0,0);
\draw (0,0,0) -- (1.0,0.0,0.0);
\draw (0,0,0) -- (0.30901699437494745,0.9510565162951535,0.0);
\draw (0,0,0) -- (-0.8090169943749473,0.5877852522924731,0.0);
\draw (0,0,0) -- (-0.8090169943749475,-0.587785252292473,0.0);
\draw (0,0,0) -- (0.3090169943749473,-0.9510565162951535,0.0);
\draw (1.0,0.0,0.0) -- (0.30901699437494745,0.9510565162951535,0.0);
\draw (1.0,0.0,0.0) -- (0.3090169943749473,-0.9510565162951535,0.0);
\draw (0.30901699437494745,0.9510565162951535,0.0) -- (-0.8090169943749473,0.5877852522924731,0.0);
\draw (-0.8090169943749473,0.5877852522924731,0.0) -- (-0.8090169943749475,-0.587785252292473,0.0);
\draw (-0.8090169943749475,-0.587785252292473,0.0) -- (0.3090169943749473,-0.9510565162951535,0.0);
\end{tikzpicture}  
  \end{minipage}
  \vspace*{-1cm}
  
  \caption{Colorable triangulation (left) and a non-colorable triangulation (right) for $n=2$}
  \label{fig:noncolorable}
\end{figure}
\end{remark}
Due to the restrictive requirements for $n{+}1$-colorable initial triangulations, we relax this assumption in Section~\ref{subsec:gener-color-init}. Before, we discuss existing alternatives.

The lack of $n{+}1$-colorability stated in Remark~\ref{rem:colorability} for many initial triangulations is a severe drawback. 
For $n=2$ this problem can be overcome by choosing a bisection edge for each triangle such that the property to be a bisection edge is global, i.e.\ independent of the triangle, see \cite{BinevDahmenDeVore04}. This is possible for any triangulation with~$n=2$ using a perfect matching for cubic graphs; an efficient algorithm to find such a matching was given in \cite{BiedlBoseDemaineLubiw01}. The bisection rule of Algorithm~\ref{algo:maubach} is also applicable in this situation. The resulting triangulations have the basic properties of Theorem~\ref{thm:basic-properties} and satisfy the closure estimate in Theorem~\ref{thm:closuregcolored}.
Moreover, \cite{KarkulikPavlicekPraetorius13} verifies Theorem~\ref{thm:closuregcolored} for arbitrary initializations in $n=2$ showing that, unlike in higher dimensions, an initial coloring for the newest vertex bisection is not needed.\label{page:n2SpecialCase} 
  
Stevenson replaces the coloring condition for $n\geq 2$ by the weaker \emph{matching neighbor} condition in \cite{Stevenson08}. 
The resulting triangulations have the basic properties of Theorem~\ref{thm:basic-properties} and those of Theorem~\ref{thm:closuregcolored}, too. However, not every triangulation can be initialized with tagged simplices satisfying this condition for~$n \geq 3$. It is for example necessary that each interior bisection edge is surrounded by an even number of tetrahedrons \cite[Lem.~1.7.14]{Schoen}. This is not possible in all situations. Thus, this condition has for~$n\geq 3$ the same problems as the coloring condition.
Kossaczký and Stevenson remedied this problem by an initial refinement \cite{Kossaczky94, Stevenson08}, bisecting every initial simplex into $(n{+}1)!/2$ simplices, each one containing a whole original edge before coloring. However, this approach worsens the shape regularity and increases the size of the resulting novel initial triangulation.

B\"ansch~\cite{Baensch91}, Arnold, Mukherjee and Pouly~\cite{AMP.2000} and Schön~\cite{Schoen} suggested to introduce five types of tetrahedrons ($P_u$, $P_f$, $A$, $M$ and $O$ in the language of \cite{AMP.2000}) with special bisection rules. Only three of them ($P_u$, $P_f$ and $A$) appear in the context of colored triangulations with the bisection algorithm of Maubach and Traxler. The other two types $(M$ and $O$) are used only for some tetrahedrons of the initial triangulations. Bisecting those two additional types creates tetrahedrons of type $P_u$, $P_f$ and $A$, whose bisection then follows the rules of Maubach and Traxler. 
Consequently, the resulting triangulations are again shape-regular and form a distributive ordered lattice.
However, consecutive uniform refinements are no longer conforming, cf.~\ref{itm:basic-colored-uniform} of Theorem~\ref{thm:basic-properties}. 
Moreover, it is unknown if triangulations resulting from the strategies in \cite{Baensch91,AMP.2000} satisfy Theorem~\ref{thm:closuregcolored}.
If the closure estimate in \eqref{eq:ClosureEstThm} is satisfied for these strategies, the constant $\CBDV$ depends additionally on the number of initial simplices, since the refinement strategy bisects the longest edge of any initial simplex, see Figure~\ref{fig:BadLongestEdge} for an illustration. 
The same drawback experiences the alternative algorithm in~\cite{Schoen} which satisfies \eqref{eq:ClosureEstThm} with constant $\CBDV$ depending on the number of simplices in $\tria_0$, see \cite[Thm.~1.8.23]{Schoen}, and the newest vertex bisection for $n=2$ investigated in \cite[Thm.~2 and Lem.~7]{KarkulikPavlicekPraetorius13} with $\CBDV$ depending on a constant $C_\textup{chain}$ bounded from below by the longest refinement chain in $\tria_0$. 
A similar drawback must apply to longest edge bisection schemes \cite{Rivara84,Rivara91}, where currently no theoretical results on closure estimates and preservation of shape regularity in higher dimensions exist.
%
  \begin{figure}[h!]
    \begin{tikzpicture}
    \draw (0,0)       -- (9.5,0);
    \draw (0,0)       -- (0.5,0.866);
    \draw (0.5,0.866) -- (11,0.866);
    \draw (1.1,0)       -- (0.5,0.866) node [midway, left=-0.1cm] {\tiny$\, \rightarrow$};
    \draw (1.8,0.866) -- (1.1,0)node [midway, left=-0.1cm] {\tiny$\, \rightarrow$};
    \draw (2.6,0)       -- (1.8,0.866) node [midway, left=-0.1cm] {\tiny$\, \rightarrow$};
    \draw (3.5,0.866) -- (2.6,0)node [midway, left=-0.1cm] {\tiny$\, \rightarrow$};
    \draw (4.5,0)       -- (3.5,0.866)node [midway, left=-0.1cm] {\tiny$\, \rightarrow$};
    \draw (5.6,0.866) -- (4.5,0)node [midway, left=-0.1cm] {\tiny$\, \rightarrow$};
    \draw (6.8,0)       -- (5.6,0.866)node [midway, left=-0.1cm] {\tiny$\, \rightarrow$};
    \draw (8.1,0.866) -- (6.8,0)node [midway, left=-0.1cm] {\tiny$\, \rightarrow$};
    \draw (9.5,0)       -- (8.1,0.866)node [midway, left=-0.1cm] {\tiny$\, \rightarrow$};
    \draw (11,0.866) -- (9.5,0)node [midway, left=-0.1cm] {\tiny$\, \rightarrow$};
  \end{tikzpicture}
  \caption{Initial triangulation with bisection edges (marked by arrows) where the refinement of the simplex on the left causes the refinement of all simplices.}\label{fig:BadLongestEdge}
  \end{figure}%
\subsection{Initial Coloring with $N \geq n$}
\label{subsec:gener-color-init}
Remark~\ref{rem:colorability} shows that not every triangulation~$\tria_0$ in $\RRn$ can be $n{+}1$-colored.
We overcome this difficulty in Definition~\ref{def:gcoloring} by allowing for more colors $0,\dots,N$ with $N \geq n$. 
Such a coloring can always be obtained, see Algorithm~\ref{algo:coloring}.
To derive a proper bisection algorithm for any $N{+}1$-colored~$\tria_0$, we think of each~$T \in \tria_0$ as an $n$-subsimplex of a virtual $N$-simplex in~$\RRN$ by embedding~$\tria_0$ into~$\RRN$ and adding virtual vertices to each $n$-simplex so that it becomes an $N$-simplex. These virtual $N$-simplices are only connected via their $n$-subsimplices given by~$\tria_0$. 
The additional $N{-}n$ vertices are colored with the remaining $N{-}n$ colors,
see Figure~\ref{fig:noncolorable-lifted}. We then apply the subsimplex bisection rule of Algorithm~\ref{algo:subsimplex} with~$m=n$. 

Note that the previous subsection provides an introduction, explaining the origins of the notions $\gen$, $\level_N$, and $\type$. Going forward, we only require Algorithm~\ref{algo:subsimplex}, which defines the generation $\gen$ of bisection vertices. Once the generation of a vertex is known, the relation in \eqref{eq:leveltype} determines the level $\level_N$ and type $\type$. All of these quantities serve as theoretical tools, since the resulting bisection routine is equivalent to Maubach's original routine, as demonstrated by the following theorem.
\begin{theorem}[Equivalent refinements]\label{thm:EquiRefinements}
Let $\tria_0$ be an $N{+}1$-colored initial triangulation. Successive bisections of simplices in $\tria_0$ according to Algorithm~\ref{algo:subsimplex} lead to the same simplices as successive applications of Algorithm~\ref{algo:maubach} to simplices in $\tria_0$ with the initialization in Definition~\ref{def:MaubachInit}.
\end{theorem}
\begin{proof}
We verify this theorem inductively by claiming that bisections with Algorithm~\ref{algo:maubach} or \ref{algo:subsimplex} lead to the same children. Additionally, we show that each resulting tagged $n$-simplex $T = [v_0,\dots,v_n]_\gamma$ with $\gamma\in \lbrace 1,\dots,n\rbrace$ satisfies the following:
\begin{enumerate}
\item $\level_N(v_{\gamma+1}) = \dots = \level_N(v_{n})\text{ and }\gen(v_{\gamma+1}) > \dots > \gen(v_{n})$ if $\gamma < n$,\label{itm:Proof1}\\[-.5em]
\item $\level_N(v_1) = \dots = \level_N(v_\gamma) \text{ and } \gen(v_1) > \dots > \gen(v_\gamma)$,\label{itm:Proof2}\\[-.5em]
\item \label{itm:Proof4}$\begin{cases}
\gen(v_0) > \gen(v_1)\quad \text{and}\quad \level_N(v_0) = \level(v_1)&\text{or}\\
\gen(v_0) < \gen(v_1)\quad \text{and}\quad \level_N(v_0) < \level(v_1),
\end{cases}$
\item\label{itm:Proof5}
for $\gamma < n$ one has $\level_N(v_\gamma) + 1 = \level_N(v_{\gamma+1})$ and\\
$\begin{cases}
\gen(v_{\gamma+1}) < \gen(v_{\gamma})+2N+1-\type(v_0)& \text{if }\level_N(v_0) = \level_N(v_1),\\
\gen(v_{\gamma+1}) < \gen(v_{\gamma})+N& \text{if }\level_N(v_0) < \level_N(v_1).
\end{cases}$
\end{enumerate}

Indeed, the initialization in Definition~\ref{def:MaubachInit} and the definition $\gen(v_j) \coloneqq -\frc(v_j)$ after \eqref{eq:GenMaxVertices} guarantee this property for any $T \in \tria_0$, verifying the base case.

If the properties in \ref{itm:Proof1}--\ref{itm:Proof5} are satisfied, the sorted simplex~\eqref{eq:sorted} reads
\begin{equation*}
T= \begin{cases} 
\simplex{v_{\gamma + 1},\dots,v_n, v_0,\dots,v_\gamma }&\text{if } \level_N(v_0) = \level(v_1), \\
\simplex{v_{\gamma + 1},\dots,v_n, v_1,\dots,v_\gamma, v_0 } &\text{if }\level_N(v_0) < \level(v_1).
\end{cases}
\end{equation*}
Hence, the definitions of the algorithms show that they lead to the same bisection vertex $\bsv(T) = v' = \textup{mid}([v_0,v_\gamma])$ and corresponding children
\begin{equation*}
\begin{aligned}
T_1 &= \maubach{ v_0, v_1, \dots, v_{\gamma-1}, v', v_{\gamma+1}, \dots,v_n}_{\gamma'},\\
T_2 &= \maubach{ v_1,v_2,\ldots, v_\gamma,v', v_{\gamma+1}, \dots,v_n}_{\gamma'}
\end{aligned}
\end{equation*}
with tag $\gamma' = \gamma -1$ for $\gamma \geq 2$ and $\gamma' = n$ for $\gamma = 1$.
It remains to show that the tagged simplices $T_1$ and $T_2$ satisfy \ref{itm:Proof1}--\ref{itm:Proof5}.
\begin{itemize}[left=0pt]
\item Case 1 ($\level(v_0) = \level(v_1)$). Algorithm~\ref{algo:subsimplex} and \eqref{eq:leveltype} imply that
\begin{equation*}
\begin{aligned}
\gen(v') & = \gen(v_\gamma) + 2N + 1 - \type(v_0)\\
& = N (\level_N(v_\gamma) + 1) + 1 - ( \type(v_0) - \type(v_\gamma)).
\end{aligned}
\end{equation*}
Since $\type(v_0) - \type(v_\gamma) \in [1,N-1]$, we obtain $\level_N(v') = \level(v_\gamma) + 1$. Combining this identity with the induction hypothesis verifies \ref{itm:Proof1}--\ref{itm:Proof5}.

\item Case 2 ($\level(v_0) < \level(v_1)$). By Algorithm~\ref{algo:subsimplex} we obtain $\level_N(v') = \level_N(v_\gamma) + 1$. This identity and the induction hypothesis verify \ref{itm:Proof1}--\ref{itm:Proof5}.\qedhere
\end{itemize}
\end{proof}
The theorem shows that the coloring is solely needed for the initialization and that the generation structure is a purely theoretical tool not needed in implementations. After that, the refinement uses the established routine of Maubach and Traxler. 
The virtual extension is only a theoretical concept and the original idea of our method. 
We emphasize that we do not need to extend~$\tria_0$ in any practical computation. Moreover, the extension to virtual $N$-simplices has limitations discussed in Remark~\ref{rem:problem-virtual}. 
\begin{remark}[Virtual extension]
  \label{rem:virtual-extension}
  Figure~\ref{fig:noncolorable-lifted} depicts how a not $3$-colorable triangulation like Figure~\ref{fig:noncolorable} (right) can be $4$-colored. It also illustrates that~$\tria_0$ can be interpreted as the collection of $2$-subsimplices in $\mathbb{R}^3$.
\end{remark}

\begin{figure}
  \centering
  \begin{minipage}{.45\textwidth}
    \centering
    \tdplotsetmaincoords{62}{33}
\begin{tikzpicture}[scale = 2,tdplot_main_coords]
\fill (0,0,0) circle (0.3pt) node[above] {$0$};
\fill (1.0,0.0,0.0) circle (0.3pt) node[right] {$2$};
\fill (0.30901699437494745,0.9510565162951535,0.0) circle (0.3pt) node[right] {$1$};
\fill (-0.8090169943749473,0.5877852522924731,0.0) circle (0.3pt) node[above] {$2$};
\fill (-0.8090169943749473,-0.5877852522924731,0.0) circle (0.3pt) node[left] {?};
\fill (0.3090169943749473,-0.9510565162951535,0.0) circle (0.3pt) node[below] {$1$};

\draw (0,0,0) -- (0,0,0);
\draw (0,0,0) -- (1.0,0.0,0.0);
\draw (0,0,0) -- (0.30901699437494745,0.9510565162951535,0.0);
\draw (0,0,0) -- (-0.8090169943749473,0.5877852522924731,0.0);
\draw (0,0,0) -- (-0.8090169943749475,-0.587785252292473,0.0);
\draw (0,0,0) -- (0.3090169943749473,-0.9510565162951535,0.0);
\draw (1.0,0.0,0.0) -- (0.30901699437494745,0.9510565162951535,0.0);
\draw (1.0,0.0,0.0) -- (0.3090169943749473,-0.9510565162951535,0.0);
\draw (0.30901699437494745,0.9510565162951535,0.0) -- (-0.8090169943749473,0.5877852522924731,0.0);
\draw (-0.8090169943749473,0.5877852522924731,0.0) -- (-0.8090169943749475,-0.587785252292473,0.0);
\draw (-0.8090169943749475,-0.587785252292473,0.0) -- (0.3090169943749473,-0.9510565162951535,0.0);
\end{tikzpicture}  
  \end{minipage}
  \begin{minipage}{.45\textwidth}
    \centering
    \tdplotsetmaincoords{62}{33}
\begin{tikzpicture}[scale = 2,tdplot_main_coords]
\fill (0,0,0) circle (0.3pt) node[above] {$0$};
\fill (1.0,0.0,0.0) circle (0.3pt) node[right] {$2$};
\fill (0.30901699437494745,0.9510565162951535,0.0) circle (0.3pt) node[right] {$1$};
\fill (-0.8090169943749473,0.5877852522924731,0.0) circle (0.3pt) node[above] {$2$};
\fill (-0.8090169943749473,-0.5877852522924731,0.0) circle (0.3pt) node[left] {$3$};
\fill (0.3090169943749473,-0.9510565162951535,0.0) circle (0.3pt) node[below] {$1$};

\fill (0.4363389981249825,0.31701883876505116,0.4) circle (0.3pt) node[above] {$3$};
\fill (-0.16666666666666663,0.5129472561958756,0.4) circle (0.3pt) node[above] {$3$};
\fill (-0.6393446629166316,3.700743415417188e-17,0.4) circle (0.3pt) node[above] {$1$};

\fill (0.4363389981249824,-0.31701883876505116,0.4) circle (0.3pt) node[above] {$3$};
\fill (-0.16666666666666674,-0.5129472561958756,0.4) circle (0.3pt) node[above] {};
\fill (-0.16666666666666674,-0.5129472561958756,0.36) circle (0pt) node[above] {$2$};

\draw[dotted] (0,0,0) -- (1.0,0.0,0.0);
\draw[dotted] (0,0,0) -- (0.30901699437494745,0.9510565162951535,0.0);
\draw (0,0,0) -- (0.4363389981249825,0.31701883876505116,0.4);
\draw (0,0,0) -- (-0.8090169943749473,0.5877852522924731,0.0);
\draw (0,0,0) -- (-0.16666666666666663,0.5129472561958756,0.4);
\draw[dotted] (0,0,0) -- (-0.8090169943749475,-0.587785252292473,0.0);
\draw (0,0,0) -- (-0.6393446629166316,3.700743415417188e-17,0.4);
\draw (0,0,0) -- (0.3090169943749473,-0.9510565162951535,0.0);
\draw (0,0,0) -- (-0.16666666666666674,-0.5129472561958756,0.4);
\draw (0,0,0) -- (0.4363389981249824,-0.31701883876505116,0.4);
\draw (1.0,0.0,0.0) -- (0.30901699437494745,0.9510565162951535,0.0);
\draw (1.0,0.0,0.0) -- (0.4363389981249825,0.31701883876505116,0.4);
\draw (1.0,0.0,0.0) -- (0.3090169943749473,-0.9510565162951535,0.0);
\draw (1.0,0.0,0.0) -- (0.4363389981249824,-0.31701883876505116,0.4);
\draw (0.30901699437494745,0.9510565162951535,0.0) -- (0.4363389981249825,0.31701883876505116,0.4);
\draw[dotted] (0.30901699437494745,0.9510565162951535,0.0) -- (-0.8090169943749473,0.5877852522924731,0.0);
\draw (0.30901699437494745,0.9510565162951535,0.0) -- (-0.16666666666666663,0.5129472561958756,0.4);
\draw (-0.8090169943749473,0.5877852522924731,0.0) -- (-0.16666666666666663,0.5129472561958756,0.4);
\draw[dotted] (-0.8090169943749473,0.5877852522924731,0.0) -- (-0.8090169943749475,-0.587785252292473,0.0);
\draw (-0.8090169943749473,0.5877852522924731,0.0) -- (-0.6393446629166316,3.700743415417188e-17,0.4);
\draw (-0.8090169943749475,-0.587785252292473,0.0) -- (-0.6393446629166316,3.700743415417188e-17,0.4);
\draw (-0.8090169943749475,-0.587785252292473,0.0) -- (0.3090169943749473,-0.9510565162951535,0.0);
\draw (-0.8090169943749475,-0.587785252292473,0.0) -- (-0.16666666666666674,-0.5129472561958756,0.4);
\draw (0.3090169943749473,-0.9510565162951535,0.0) -- (-0.16666666666666674,-0.5129472561958756,0.4);
\draw (0.3090169943749473,-0.9510565162951535,0.0) -- (0.4363389981249824,-0.31701883876505116,0.4);
\end{tikzpicture}

  \end{minipage}
    \vspace*{-1cm}
    
  \caption{$4$-coloring of the non-colorable triangulation of Figure~\ref{fig:noncolorable} and the coloring of the virtual extension to~$\setR^3$, see Remark~\ref{rem:virtual-extension}.}
  \label{fig:noncolorable-lifted}
\end{figure}
\begin{remark}[No equivalence with virtual extension]
  \label{rem:problem-virtual}
  Although our bisection algorithm for generalized colored initial triangulations~$\tria_0$ is inspired by the virtual extension explained after Definition~\ref{def:gcoloring}, one has to be careful with this analogy: Let us start with an $N{+}1$-colored initial triangulation~$(\tria_0,\frc)$ and let $\tria_0^+$ denote the colored, virtual extension of~$\tria_0$ to~$\RRN$. Then the restriction of~$\Bisec(\tria_0^+)$ to~$\tria_0$ does for $N> n$ in general not agree with~$\BisecT$.
Figure~\ref{fig:problem-virtual} illustrates this phenomenon for~$n=2$ and $N=3$, where~$\tria_0$ just consists of the triangle~$T$ spanned by the vertices~$0$, $1$, and $2$ (we use the colors as labels for the vertices). The complete tetrahedron~$T^+$ with all four vertices $0$, $1$, $2$, and $3$ is the virtual extension~$\tria_0^+ = \lbrace T^+\rbrace$. Note that $T^+$ is just the Kuhn simplex spanned by~$0,e_1, e_1+e_2,e_1+e_2+e_3$ with standard coloring $3,0,1,2$. The triangle~$T$ is the face of~$T^+$ in the hyperplane $\set{x_1=1}$. The pictures are rotated for better visibility such that the hyperplane~$\set{x_1=1}$ agrees with the surface of the drawing plane.
We repeatedly refine the 3-simplex in~$\tria_0^+$ or the triangle in~$\tria$  respectively at the point $P\coloneqq (1, {7}/{16},{3}/{16})$. The left three pictures show three consecutive refinements of~$\tria_0^+$. Note that in the second picture the triangles within $\set{x_1=1}$ that contain the point~$P$ are not bisected, but some interior tetrahedra are bisected and its conformal closure bisects the simplex at the vertex~$2$.  However, it can be seen in the last picture that the algorithm for the colored~$\tria_0$ does not need to bisect this triangle at the vertex~$2$. Hence, the triangulation in the last picture from~$\BisecT$ is not the restriction of any triangulation from~$\Bisec(\tria_0^+)$ to the hyperplane~$\set{x_1=1}$.
  \begin{figure}
    \begin{minipage}{.25\textwidth}%
      \begin{center}%
        \tdplotsetmaincoords{85}{80}
\begin{tikzpicture}[scale = 2,tdplot_main_coords]
\fill (0,0,0) circle (0.3pt) node[above] {$3$};
\fill (1,0,0) circle (0.3pt) node[below] {$0$};
\fill (1,1,0) circle (0.3pt) node[below] {$1$};
\fill (1,1,1) circle (0.3pt) node[above] {$2$};
\draw [draw=black] (0.5,0.0,0.0) -- (0.5,0.5,0.5) -- (0.0,0.0,0.0) -- cycle;
\draw [draw=black] (0.75,0.25,0.25) -- (0.5,0.0,0.0) -- (1.0,0.0,0.0) -- cycle;
\draw [draw=black] (1.0,0.25,0.25) -- (0.75,0.25,0.25) -- (1.0,0.0,0.0) -- cycle;
\draw [draw=black] (0.75,0.75,0.75) -- (1.0,0.5,0.5) -- (1.0,1.0,1.0) -- cycle;
\draw [draw=black] (0.75,0.25,0.25) -- (0.5,0.0,0.0) -- (0.5,0.5,0.5) -- cycle;
\draw [draw=black] (0.75,0.5,0.5) -- (0.75,0.25,0.25) -- (0.5,0.5,0.5) -- cycle;
\draw [draw=black] (0.75,0.5,0.5) -- (0.75,0.75,0.75) -- (0.5,0.5,0.5) -- cycle;
\draw [draw=black] (0.875,0.375,0.375) -- (0.75,0.5,0.5) -- (1.0,0.5,0.5) -- cycle;
\draw [draw=black] (1.0,0.375,0.375) -- (0.875,0.375,0.375) -- (1.0,0.5,0.5) -- cycle;
\draw [draw=black] (0.75,0.5,0.5) -- (0.75,0.75,0.75) -- (1.0,0.5,0.5) -- cycle;
\draw [draw=black] (0.875,0.375,0.375) -- (1.0,0.25,0.25) -- (0.75,0.25,0.25) -- cycle;
\draw [draw=black] (0.875,0.375,0.375) -- (0.75,0.5,0.5) -- (0.75,0.25,0.25) -- cycle;
\draw [draw=black] (1.0,0.375,0.375) -- (0.875,0.375,0.375) -- (1.0,0.25,0.25) -- cycle;
\draw [draw=black, fill=white] (1.0,0.25,0.0) -- (1.0,0.25,0.25) -- (1.0,0.0,0.0) -- cycle;
\draw [draw=black, fill=white] (1.0,0.75,0.25) -- (1.0,1.0,0.5) -- (1.0,1.0,0.0) -- cycle;
\draw [draw=black, fill=white] (1.0,0.75,0.0) -- (1.0,0.75,0.25) -- (1.0,1.0,0.0) -- cycle;
\draw [draw=black, fill=white] (1.0,1.0,0.5) -- (1.0,0.5,0.5) -- (1.0,1.0,1.0) -- cycle;
\draw [draw=black, fill=white] (1.0,0.5,0.25) -- (1.0,0.75,0.25) -- (1.0,0.5,0.5) -- cycle;
\draw [draw=black, fill=white] (1.0,0.375,0.375) -- (1.0,0.5,0.25) -- (1.0,0.5,0.5) -- cycle;
\draw [draw=black, fill=white] (1.0,0.75,0.25) -- (1.0,1.0,0.5) -- (1.0,0.5,0.5) -- cycle;
\draw [draw=black, fill=white] (1.0,0.375,0.125) -- (1.0,0.25,0.0) -- (1.0,0.5,0.0) -- cycle;
\draw [draw=black, fill=white] (1.0,0.5,0.125) -- (1.0,0.375,0.125) -- (1.0,0.5,0.0) -- cycle;
\draw [draw=black, fill=white] (1.0,0.5,0.125) -- (1.0,0.625,0.125) -- (1.0,0.5,0.0) -- cycle;
\draw [draw=black, fill=white] (1.0,0.625,0.125) -- (1.0,0.75,0.0) -- (1.0,0.5,0.0) -- cycle;
\draw [draw=black, fill=white] (1.0,0.375,0.125) -- (1.0,0.25,0.0) -- (1.0,0.25,0.25) -- cycle;
\draw [draw=black, fill=white] (1.0,0.375,0.25) -- (1.0,0.375,0.125) -- (1.0,0.25,0.25) -- cycle;
\draw [draw=black, fill=white] (1.0,0.375,0.25) -- (1.0,0.375,0.375) -- (1.0,0.25,0.25) -- cycle;
\draw [draw=black, fill=white] (1.0,0.625,0.125) -- (1.0,0.5,0.25) -- (1.0,0.75,0.25) -- cycle;
\draw [draw=black, fill=white] (1.0,0.5,0.125) -- (1.0,0.375,0.125) -- (1.0,0.5,0.25) -- cycle;
\draw [draw=black, fill=white] (1.0,0.375,0.25) -- (1.0,0.375,0.125) -- (1.0,0.5,0.25) -- cycle;
\draw [draw=black, fill=white] (1.0,0.5,0.125) -- (1.0,0.625,0.125) -- (1.0,0.5,0.25) -- cycle;
\draw [draw=black, fill=white] (1.0,0.375,0.25) -- (1.0,0.375,0.375) -- (1.0,0.5,0.25) -- cycle;
\draw [draw=black, fill=white] (1.0,0.625,0.125) -- (1.0,0.75,0.0) -- (1.0,0.75,0.25) -- cycle;
\fill (1,7/16,3/16) circle (.6pt);
\end{tikzpicture}%
      \end{center}%
    \end{minipage}%
    \begin{minipage}{.25\textwidth}%
      \begin{center}%
        \tdplotsetmaincoords{85}{80}
\begin{tikzpicture}[scale = 2,tdplot_main_coords]
\fill (0,0,0) circle (0.3pt) node[above] {$3$};
\fill (1,0,0) circle (0.3pt) node[below] {$0$};
\fill (1,1,0) circle (0.3pt) node[below] {$1$};
\fill (1,1,1) circle (0.3pt) node[above] {$2$};
\draw [draw=black] (0.5,0.0,0.0) -- (0.5,0.5,0.5) -- (0.0,0.0,0.0) -- cycle;
\draw [draw=black] (0.75,0.25,0.25) -- (0.5,0.0,0.0) -- (1.0,0.0,0.0) -- cycle;
\draw [draw=black] (1.0,0.25,0.25) -- (0.75,0.25,0.25) -- (1.0,0.0,0.0) -- cycle;
\draw [draw=black] (1.0,0.75,0.75) -- (0.75,0.75,0.75) -- (1.0,1.0,1.0) -- cycle;
\draw [draw=black] (0.75,0.25,0.25) -- (0.5,0.0,0.0) -- (0.5,0.5,0.5) -- cycle;
\draw [draw=black] (0.75,0.5,0.5) -- (0.75,0.25,0.25) -- (0.5,0.5,0.5) -- cycle;
\draw [draw=black] (0.75,0.5,0.5) -- (0.75,0.75,0.75) -- (0.5,0.5,0.5) -- cycle;
\draw [draw=black] (0.875,0.375,0.375) -- (0.75,0.5,0.5) -- (1.0,0.5,0.5) -- cycle;
\draw [draw=black] (1.0,0.375,0.375) -- (0.875,0.375,0.375) -- (1.0,0.5,0.5) -- cycle;
\draw [draw=black] (0.75,0.5,0.5) -- (0.75,0.75,0.75) -- (1.0,0.5,0.5) -- cycle;
\draw [draw=black] (1.0,0.75,0.75) -- (0.75,0.75,0.75) -- (1.0,0.5,0.5) -- cycle;
\draw [draw=black] (0.875,0.375,0.375) -- (1.0,0.25,0.25) -- (0.75,0.25,0.25) -- cycle;
\draw [draw=black] (0.875,0.375,0.375) -- (0.75,0.5,0.5) -- (0.75,0.25,0.25) -- cycle;
\draw [draw=black] (1.0,0.375,0.375) -- (0.875,0.375,0.375) -- (1.0,0.25,0.25) -- cycle;
\draw [draw=black, fill=white] (1.0,0.25,0.0) -- (1.0,0.25,0.25) -- (1.0,0.0,0.0) -- cycle;
\draw [draw=black, fill=white] (1.0,0.75,0.25) -- (1.0,1.0,0.5) -- (1.0,1.0,0.0) -- cycle;
\draw [draw=black, fill=white] (1.0,0.75,0.0) -- (1.0,0.75,0.25) -- (1.0,1.0,0.0) -- cycle;
\draw [draw=black, fill=white] (1.0,0.75,0.75) -- (1.0,1.0,0.5) -- (1.0,1.0,1.0) -- cycle;
\draw [draw=black, fill=white] (1.0,0.5,0.25) -- (1.0,0.75,0.25) -- (1.0,0.5,0.5) -- cycle;
\draw [draw=black, fill=white] (1.0,0.375,0.375) -- (1.0,0.5,0.25) -- (1.0,0.5,0.5) -- cycle;
\draw [draw=black, fill=white] (1.0,0.75,0.5) -- (1.0,0.75,0.25) -- (1.0,0.5,0.5) -- cycle;
\draw [draw=black, fill=white] (1.0,0.75,0.5) -- (1.0,0.75,0.75) -- (1.0,0.5,0.5) -- cycle;
\draw [draw=black, fill=white] (1.0,0.375,0.125) -- (1.0,0.25,0.0) -- (1.0,0.5,0.0) -- cycle;
\draw [draw=black, fill=white] (1.0,0.5,0.125) -- (1.0,0.375,0.125) -- (1.0,0.5,0.0) -- cycle;
\draw [draw=black, fill=white] (1.0,0.5,0.125) -- (1.0,0.625,0.125) -- (1.0,0.5,0.0) -- cycle;
\draw [draw=black, fill=white] (1.0,0.625,0.125) -- (1.0,0.75,0.0) -- (1.0,0.5,0.0) -- cycle;
\draw [draw=black, fill=white] (1.0,0.375,0.125) -- (1.0,0.25,0.0) -- (1.0,0.25,0.25) -- cycle;
\draw [draw=black, fill=white] (1.0,0.375,0.25) -- (1.0,0.375,0.125) -- (1.0,0.25,0.25) -- cycle;
\draw [draw=black, fill=white] (1.0,0.375,0.25) -- (1.0,0.375,0.375) -- (1.0,0.25,0.25) -- cycle;
\draw [draw=black, fill=white] (1.0,0.625,0.125) -- (1.0,0.5,0.25) -- (1.0,0.75,0.25) -- cycle;
\draw [draw=black, fill=white] (1.0,0.5,0.125) -- (1.0,0.375,0.125) -- (1.0,0.5,0.25) -- cycle;
\draw [draw=black, fill=white] (1.0,0.375,0.25) -- (1.0,0.375,0.125) -- (1.0,0.5,0.25) -- cycle;
\draw [draw=black, fill=white] (1.0,0.5,0.125) -- (1.0,0.625,0.125) -- (1.0,0.5,0.25) -- cycle;
\draw [draw=black, fill=white] (1.0,0.375,0.25) -- (1.0,0.375,0.375) -- (1.0,0.5,0.25) -- cycle;
\draw [draw=black, fill=white] (1.0,0.75,0.5) -- (1.0,0.75,0.25) -- (1.0,1.0,0.5) -- cycle;
\draw [draw=black, fill=white] (1.0,0.75,0.5) -- (1.0,0.75,0.75) -- (1.0,1.0,0.5) -- cycle;
\draw [draw=black, fill=white] (1.0,0.625,0.125) -- (1.0,0.75,0.0) -- (1.0,0.75,0.25) -- cycle;
\fill (1,7/16,3/16) circle (.6pt);
\end{tikzpicture}%
      \end{center}%
    \end{minipage}%
    \begin{minipage}{.25\textwidth}%
      \begin{center}%
        \tdplotsetmaincoords{85}{80}
\begin{tikzpicture}[scale = 2,tdplot_main_coords]
\fill (0,0,0) circle (0.3pt) node[above] {$3$};
\fill (1,0,0) circle (0.3pt) node[below] {$0$};
\fill (1,1,0) circle (0.3pt) node[below] {$1$};
\fill (1,1,1) circle (0.3pt) node[above] {$2$};
\draw [draw=black] (0.5,0.0,0.0) -- (0.5,0.5,0.5) -- (0.0,0.0,0.0) -- cycle;
\draw [draw=black] (0.75,0.25,0.25) -- (0.5,0.0,0.0) -- (1.0,0.0,0.0) -- cycle;
\draw [draw=black] (1.0,0.25,0.25) -- (0.75,0.25,0.25) -- (1.0,0.0,0.0) -- cycle;
\draw [draw=black] (1.0,0.75,0.75) -- (0.75,0.75,0.75) -- (1.0,1.0,1.0) -- cycle;
\draw [draw=black] (0.75,0.25,0.25) -- (0.5,0.0,0.0) -- (0.5,0.5,0.5) -- cycle;
\draw [draw=black] (0.75,0.5,0.5) -- (0.75,0.25,0.25) -- (0.5,0.5,0.5) -- cycle;
\draw [draw=black] (0.75,0.5,0.5) -- (0.75,0.75,0.75) -- (0.5,0.5,0.5) -- cycle;
\draw [draw=black] (0.875,0.375,0.375) -- (0.75,0.5,0.5) -- (1.0,0.5,0.5) -- cycle;
\draw [draw=black] (1.0,0.375,0.375) -- (0.875,0.375,0.375) -- (1.0,0.5,0.5) -- cycle;
\draw [draw=black] (0.75,0.5,0.5) -- (0.75,0.75,0.75) -- (1.0,0.5,0.5) -- cycle;
\draw [draw=black] (1.0,0.75,0.75) -- (0.75,0.75,0.75) -- (1.0,0.5,0.5) -- cycle;
\draw [draw=black] (0.875,0.375,0.375) -- (1.0,0.25,0.25) -- (0.75,0.25,0.25) -- cycle;
\draw [draw=black] (0.875,0.375,0.375) -- (0.75,0.5,0.5) -- (0.75,0.25,0.25) -- cycle;
\draw [draw=black] (1.0,0.375,0.375) -- (0.875,0.375,0.375) -- (1.0,0.25,0.25) -- cycle;
\draw [draw=black, fill=white] (1.0,0.25,0.0) -- (1.0,0.25,0.25) -- (1.0,0.0,0.0) -- cycle;
\draw [draw=black, fill=white] (1.0,0.75,0.25) -- (1.0,1.0,0.5) -- (1.0,1.0,0.0) -- cycle;
\draw [draw=black, fill=white] (1.0,0.75,0.0) -- (1.0,0.75,0.25) -- (1.0,1.0,0.0) -- cycle;
\draw [draw=black, fill=white] (1.0,0.75,0.75) -- (1.0,1.0,0.5) -- (1.0,1.0,1.0) -- cycle;
\draw [draw=black, fill=white] (1.0,0.5,0.25) -- (1.0,0.75,0.25) -- (1.0,0.5,0.5) -- cycle;
\draw [draw=black, fill=white] (1.0,0.375,0.375) -- (1.0,0.5,0.25) -- (1.0,0.5,0.5) -- cycle;
\draw [draw=black, fill=white] (1.0,0.75,0.5) -- (1.0,0.75,0.25) -- (1.0,0.5,0.5) -- cycle;
\draw [draw=black, fill=white] (1.0,0.75,0.5) -- (1.0,0.75,0.75) -- (1.0,0.5,0.5) -- cycle;
\draw [draw=black, fill=white] (1.0,0.375,0.125) -- (1.0,0.25,0.0) -- (1.0,0.5,0.0) -- cycle;
\draw [draw=black, fill=white] (1.0,0.5,0.125) -- (1.0,0.375,0.125) -- (1.0,0.5,0.0) -- cycle;
\draw [draw=black, fill=white] (1.0,0.5,0.125) -- (1.0,0.625,0.125) -- (1.0,0.5,0.0) -- cycle;
\draw [draw=black, fill=white] (1.0,0.625,0.125) -- (1.0,0.75,0.0) -- (1.0,0.5,0.0) -- cycle;
\draw [draw=black, fill=white] (1.0,0.375,0.125) -- (1.0,0.25,0.0) -- (1.0,0.25,0.25) -- cycle;
\draw [draw=black, fill=white] (1.0,0.375,0.25) -- (1.0,0.375,0.125) -- (1.0,0.25,0.25) -- cycle;
\draw [draw=black, fill=white] (1.0,0.375,0.25) -- (1.0,0.375,0.375) -- (1.0,0.25,0.25) -- cycle;
\draw [draw=black, fill=white] (1.0,0.625,0.125) -- (1.0,0.5,0.25) -- (1.0,0.75,0.25) -- cycle;
\draw [draw=black, fill=white] (1.0,0.5,0.125) -- (1.0,0.625,0.125) -- (1.0,0.5,0.25) -- cycle;
\draw [draw=black, fill=white] (1.0,0.4375,0.1875) -- (1.0,0.5,0.125) -- (1.0,0.5,0.25) -- cycle;
\draw [draw=black, fill=white] (1.0,0.375,0.25) -- (1.0,0.375,0.375) -- (1.0,0.5,0.25) -- cycle;
\draw [draw=black, fill=white] (1.0,0.4375,0.1875) -- (1.0,0.375,0.25) -- (1.0,0.5,0.25) -- cycle;
\draw [draw=black, fill=white] (1.0,0.75,0.5) -- (1.0,0.75,0.25) -- (1.0,1.0,0.5) -- cycle;
\draw [draw=black, fill=white] (1.0,0.75,0.5) -- (1.0,0.75,0.75) -- (1.0,1.0,0.5) -- cycle;
\draw [draw=black, fill=white] (1.0,0.625,0.125) -- (1.0,0.75,0.0) -- (1.0,0.75,0.25) -- cycle;
\draw [draw=black, fill=white] (1.0,0.4375,0.1875) -- (1.0,0.5,0.125) -- (1.0,0.375,0.125) -- cycle;
\draw [draw=black, fill=white] (1.0,0.4375,0.1875) -- (1.0,0.375,0.25) -- (1.0,0.375,0.125) -- cycle;
\fill (1,7/16,3/16) circle (.6pt);
\end{tikzpicture}%
      \end{center}%
    \end{minipage}%
    \begin{minipage}{.25\textwidth}%
      \begin{center}%
        \tdplotsetmaincoords{85}{80}
\begin{tikzpicture}[scale = 2,tdplot_main_coords]
\fill (1,0,0) circle (0.3pt) node[below] {$0$};
\fill (1,1,0) circle (0.3pt) node[below] {$1$};
\fill (1,1,1) circle (0.3pt) node[above] {$2$};
\node (A) at  (0,0,0) {$ $};
\draw [draw=black, fill=white] (1.0,0.25,0.0) -- (1.0,0.25,0.25) -- (1.0,0.0,0.0) -- cycle;
\draw [draw=black, fill=white] (1.0,0.75,0.25) -- (1.0,1.0,0.5) -- (1.0,1.0,0.0) -- cycle;
\draw [draw=black, fill=white] (1.0,0.75,0.0) -- (1.0,0.75,0.25) -- (1.0,1.0,0.0) -- cycle;
\draw [draw=black, fill=white] (1.0,1.0,0.5) -- (1.0,0.5,0.5) -- (1.0,1.0,1.0) -- cycle;
\draw [draw=black, fill=white] (1.0,0.5,0.25) -- (1.0,0.75,0.25) -- (1.0,0.5,0.5) -- cycle;
\draw [draw=black, fill=white] (1.0,0.375,0.375) -- (1.0,0.5,0.25) -- (1.0,0.5,0.5) -- cycle;
\draw [draw=black, fill=white] (1.0,0.75,0.25) -- (1.0,1.0,0.5) -- (1.0,0.5,0.5) -- cycle;
\draw [draw=black, fill=white] (1.0,0.375,0.125) -- (1.0,0.25,0.0) -- (1.0,0.5,0.0) -- cycle;
\draw [draw=black, fill=white] (1.0,0.5,0.125) -- (1.0,0.375,0.125) -- (1.0,0.5,0.0) -- cycle;
\draw [draw=black, fill=white] (1.0,0.5,0.125) -- (1.0,0.625,0.125) -- (1.0,0.5,0.0) -- cycle;
\draw [draw=black, fill=white] (1.0,0.625,0.125) -- (1.0,0.75,0.0) -- (1.0,0.5,0.0) -- cycle;
\draw [draw=black, fill=white] (1.0,0.375,0.125) -- (1.0,0.25,0.0) -- (1.0,0.25,0.25) -- cycle;
\draw [draw=black, fill=white] (1.0,0.375,0.25) -- (1.0,0.375,0.125) -- (1.0,0.25,0.25) -- cycle;
\draw [draw=black, fill=white] (1.0,0.375,0.25) -- (1.0,0.375,0.375) -- (1.0,0.25,0.25) -- cycle;
\draw [draw=black, fill=white] (1.0,0.625,0.125) -- (1.0,0.5,0.25) -- (1.0,0.75,0.25) -- cycle;
\draw [draw=black, fill=white] (1.0,0.5,0.125) -- (1.0,0.375,0.125) -- (1.0,0.5,0.25) -- cycle;
\draw [draw=black, fill=white] (1.0,0.375,0.25) -- (1.0,0.375,0.125) -- (1.0,0.5,0.25) -- cycle;
\draw [draw=black, fill=white] (1.0,0.5,0.125) -- (1.0,0.625,0.125) -- (1.0,0.5,0.25) -- cycle;
\draw [draw=black, fill=white] (1.0,0.375,0.25) -- (1.0,0.375,0.375) -- (1.0,0.5,0.25) -- cycle;
\draw [draw=black, fill=white] (1.0,0.625,0.125) -- (1.0,0.75,0.0) -- (1.0,0.75,0.25) -- cycle;
\draw [draw=black] (1.0,0.5,0.125) -- (1.0,0.375,0.25);
\fill (1,7/16,3/16) circle (.6pt);
\end{tikzpicture}%
      \end{center}%
    \end{minipage}
      \vspace*{-1cm}
    
    \caption{Refinements of of~$\tria_0$ (last picture) and refinements of the virtual extension~$\tria_0^+$ (first three pictures) induced by the displayed refinement point $(1,7/16,3/16)$ as explained in Remark~\ref{rem:problem-virtual}. The picture is rotated such that the plane~$\set{x_1=1}$ is in the front.}
    \label{fig:problem-virtual}
  \end{figure}
\end{remark}
\begin{remark}[Relation to \cite{AGK.2018}]\label{rem:relationToAGK18}
Alkämper, Gaspoz, and Kl\"ofkorn suggest an alternative initialization that splits the vertices $\vertices(\tria_0)$ of the initial triangulation into two disjoint sets $\vertices_0$ and $\vertices_1$ and provides global orderings for each of them in \cite{AGK.2018}. In our terms $\vertices_0$ contains the vertices of levels $-1$ and $0$ and $\vertices_1$ contains the vertices of level $1$ respectively. The order of the vertices in the tagged simplex is obtained by restriction of these global orderings to the simplex vertices and concatenation of the level 0 and the level 1 vertices.
If one chooses $\vertices_0 \coloneqq \vertices(\tria_0)$, their algorithm does the same as ours if we color the vertices with as many colors as vertices in $\vertices(\tria_0)$.
\end{remark}
\begin{remark}[4-coloring in 2D]\label{rem:4Coloring}
If $n=2$, the four color theorem states that we can always obtain a coloring with $N + 1 = 4$ colors. 
However, the 2-dimensional case can be solved with different approaches as discussed in Section~\ref{subsec:color-init-triang}. 
In higher dimensions, there exists no natural analog to the four color theorem, that is, in general the number of colors is unbounded \cite[Chapter IV]{Tietze65}.
However, due to the additional structure of the mesh we obtain with Algorithm~\ref{algo:coloring} a coloring where the number of colors can be bounded in terms of the shape regularity, see Lemma~\ref{lem:largestColor}.
\end{remark}
\subsection{Verification of Theorems~\ref{thm:basic-properties} and \ref{thm:closuregcolored}}\label{subsec:ClosureEstimate}
Even though the lifted triangulation is not needed in our computations, we exploit the notion of generation $\gen$ for triangulations in $\mathbb{R}^N$. In particular, let $(\tria_0,\frc)$ be an $N{+}1$-colored triangulation. We set for each initial vertex $v \in \vertices(\tria_0)$ its generation and level by
\begin{equation*}
\genN(v) \coloneqq -\frc(v)\qquad\text{and}\qquad \levelN(v) \coloneqq \begin{cases}
0&\text{if }\frc(v) < N,\\
-1&\text{if }\frc(v) = N.
\end{cases}
\end{equation*}
Algorithm~\ref{algo:subsimplex} assigns to each new vertex $v$ a generation~$\genN$.
This generation is unique since it only depends on the generations of the vertices in the bisection edge.
Level~$\levelN(v) \in \setZ$ and type~$\typeN(v) \in \set{1,\dots, N}$ are defined according to \eqref{eq:leveltype}.
We define $\BisecT = \bisec(\tria_0,\frc)$ as the set of all triangulations obtained by successive application of Algorithm~\ref{algo:closure-recursive} with the bisection rule of Algorithm~\ref{algo:subsimplex} (or equivalently Algorithm~\ref{algo:maubach} as stated in Theorem~\ref{thm:EquiRefinements}) to an initial triangulation~$\tria_0$ with $N{+}1$-coloring $\frc$.
We explain in the proof of Theorem~\ref{thm:basic-properties} below why the algorithm terminates.
We denote the set of all possible $n$-simplices resulting from successive bisections of initial simplices $T_0\in \tria_0$ by~$\mtree = \bigcup \BisecT$. 
The equivalence \cite[Sec.~3.3]{DieningStornTscherpel23} of the well-posed bisection routine in $\tria_0^+$ and the refinement of its subsimplices in Algorithm~\ref{algo:subsimplex} verifies the following lemma.
\begin{lemma}[Well-posedness]\label{lem:well-posed}
The generation, level, and type of each bisection vertex $b = \bsv(T)$ defined in Algorithm~\ref{algo:subsimplex} are unique, that is, they are the same for all applications of Algorithm~\ref{algo:subsimplex} to simplices $T \in \mtree$ with bisection vertex $b$.
Moreover, the generation of $\bsv(T)$ is strictly larger than the generation of $T$ in the sense that
\begin{align*}
\max_{v\in \vertices(T)} \genN(v) < \genN(\bsv(T)). 
\end{align*}
\end{lemma}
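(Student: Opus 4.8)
The plan is to reduce both claims to the well\nobreakdash-posed Maubach bisection of the virtual extension $\tria_0^+$ in $\RRN$. Recall from its construction that $\tria_0^+$ is colored with the $N{+}1$ colors $0,\dots,N$ and that every $T\in\tria_0$ is an $n$-subsimplex of a unique $N$-simplex $T^+$; bisecting the $N$-simplices of $\tria_0^+$ by Algorithm~\ref{algo:maubach} is well\nobreakdash-posed, cf.~\cite[Sec.~3.3]{DieningStornTscherpel23}, so that — exactly as recalled for genuinely colored triangulations around~\eqref{eq:GenMaxVertices} — there is a well\nobreakdash-defined generation $\genN\colon\vertices(\mtree^+)\to\setZ$ on the collection $\mtree^+$ of all $N$-simplices produced from $\tria_0^+$, with $\genN(P)=\max_{w\in\vertices(P)}\genN(w)$ for every $P\in\mtree^+$ and $\genN(\bsv(P))=\genN(P)+1$ whenever $P$ is bisected. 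Since $\genN=N(\levelN-1)+\typeN$ with $\typeN\in\{1,\dots,N\}$ amounts to division with remainder, the level and type of every vertex of $\mtree^+$ are fixed once its generation is.

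Next I would invoke the equivalence \cite[Sec.~3.3]{DieningStornTscherpel23}: every $T\in\mtree$ arises as an $n$-subsimplex of some $P\in\mtree^+$, and the bisections that Algorithm~\ref{algo:subsimplex-generalized} performs on $T$ are precisely those that $T$ undergoes, as a subsimplex, while descendants of $P$ are bisected by Algorithm~\ref{algo:maubach}. In particular the bisection edge $\bse(T)$ it returns equals $\bse(P')$ for the first descendant $P'$ of $P$ whose bisection edge is contained in $T$, so that $\bsv(T)=\bsv(P')=\textup{mid}(\bse(T))$ and the generation Algorithm~\ref{algo:subsimplex-generalized} attaches to $\bsv(T)$ equals $\genN(\bsv(P'))=\genN(P')+1$. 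As the right\nobreakdash-hand side is the value of the simplex\nobreakdash-independent function $\genN$ at the vertex $\textup{mid}(\bse(T))$ of $\mtree^+$, it does not depend on which $T\in\mtree$ we used to reach that vertex; hence the generation — and, via~\eqref{eq:leveltypeN}, the level and type — of $\bsv(T)$ is unique, which is the first assertion.

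For the inequality I would use that generations strictly increase along bisections in $\mtree^+$: the new vertex $\bsv(P)$ has generation $\genN(P)+1$, which strictly exceeds $\genN(w)$ for every $w\in\vertices(P)$, so each child of $P$ has generation $\genN(P)+1>\genN(P)$, and inductively $\genN(P')\ge\genN(P)$ for every descendant $P'$ of $P$. Combining this with $\vertices(T)\subseteq\vertices(P)$ and $\genN(P)=\max_{w\in\vertices(P)}\genN(w)$ yields
\begin{align*}
  \genN(\bsv(T))=\genN(P')+1\ \ge\ \genN(P)+1\ \ge\ \max_{v\in\vertices(T)}\genN(v)+1\ >\ \max_{v\in\vertices(T)}\genN(v),
\end{align*}
which is the second assertion.

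The real work is not in these computations but in the precise form of the equivalence used above — that Algorithm~\ref{algo:subsimplex-generalized} reproduces, edge for edge and generation for generation, the effect on $T$ of Maubach bisection of the ambient $N$-simplices. This is the content of \cite[Sec.~3.3]{DieningStornTscherpel23} and precisely why Algorithm~\ref{algo:subsimplex-generalized} was obtained by rewriting Algorithm~\ref{algo:subsimplex}; once it is available, the argument above is bookkeeping. A secondary point to keep in mind is that the generation on $\mtree^+$ must be genuinely well\nobreakdash-defined, i.e.\ any two $N$-simplices sharing a bisection edge must carry equal generation; this relies on conformity of uniform refinements in the colored setting, i.e.\ on item~\ref{itm:basic-colored-uniform} of Proposition~\ref{pro:basic-colored} applied within $\tria_0^+$.
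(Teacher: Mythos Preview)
Your argument is correct and follows the same route as the paper: both reduce the claim to the well\nobreakdash-posedness of the Maubach bisection on the colored virtual extension~$\tria_0^+$ via the equivalence in \cite[Sec.~3.3]{DieningStornTscherpel23}. The paper states this in a single sentence preceding the lemma, whereas you have spelled out the bookkeeping (including the strict inequality via $\genN(P')\ge\genN(P)$ for descendants~$P'$ of~$P$); your care in flagging that well\nobreakdash-definedness of $\genN$ on~$\mtree^+$ relies on conformity of uniform refinements is appropriate, though note that $\tria_0^+$ is a theoretical device whose $N$-simplices are joined only along their shared $n$-subsimplices from~$\tria_0$, so the relevant well\nobreakdash-posedness ultimately comes down to the fact that the generation formula for $\bsv$ depends only on the two endpoints of the bisection edge.
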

We write $\tria_2 \leq \tria_1$ if $\tria_1$ is a refinement of~$\tria_2$ for $\tria_1,\tria_2\in \BisecT$. 
This makes~$\BisecT$ a partially ordered set. 
\begin{lemma}[Refinement chains]\label{lem:RefChainsNew}
Let $T \in \tria \in \BisecT$ be a simplex.
\begin{enumerate}
\item Suppose that $T' \in \mtree$ is flagged for refinement in a recursive call of Algorithm~\ref{algo:closure-recursive}. Then there exists a chain of simplices $T_0,\dots,T_J \in \mtree$ with $J\in \mathbb{N}_0$, $T_0 = T$, and $T_J = T'$ such that the bisection edges satisfy \label{itm:RefChains1} 
\begin{align}\label{eq:ChainProp}
\bse(T_j) \in  \edges(T_{j+1})\qquad\text{for all }j= 0,\dots,J{-}1.
\end{align}
\item If $T'\in \mtree$ is a simplex that resulted from Algorithm~\ref{algo:closure-recursive}, then there exists a chain of simplices $T_0,\dots,T_J\in\mtree$ with $J\in \mathbb{N}_0$, $T_0 = T$, and \eqref{eq:ChainProp} such that $T'$ is a child of $T_J$.  \label{itm:RefChains2}
\end{enumerate}
\end{lemma}
\begin{proof}
Let $T_0 \coloneqq T \in \BisecT$ and suppose that $T' \in \mtree$ is flagged for refinement in a recursive call of Algorithm~\ref{algo:closure-recursive}.  By definition Algorithm~\ref{algo:closure-recursive} leads to a sequence of flagged simplices $T_0,\dots,T_{L_1} \in \tria$ where $T_{L_1}$ with $L_1\in \mathbb{N}_0\cup \lbrace \infty \rbrace$ denotes the first simplex that causes bisections in the sense that there is no $T'' \in \omega_\tria(\bse(T_{L_1}))$ with $\bse(T'') \neq \bse(T)$.
The chain satisfies
\begin{align*}
\bse(T_j) \in \edges(T_{j+1})\qquad\text{for all }j=0,\dots,L_1-1.
\end{align*}
If $T'$ is within the chain $T_0,\dots,T_{L_1}$, this shows the statement. Otherwise, the algorithm bisects all simplices in the edge patch $\omega_\tria(\bse(T_{L_1}))$. This leads to a new regular triangulation $\tria'$ that contains the simplices $T_0,\dots,T_{L_1-1}$. The inductive routine in Algorithm~\ref{algo:closure-recursive} proceeds to flag simplices $T'_{L_1},\dots,T'_{L_1 - 1 + L_2} \in \tria'$ for refinement with $L_2 \in \mathbb{N}_0 \cup \lbrace \infty \rbrace$ such that $\bse(T_{L_1 - 1}) \in \edges(T'_{L_1})$ and 
\begin{align*}
\bse(T'_{L_1 + \ell}) \in \edges(T'_{L_1 + \ell + 1})\qquad\text{for all } \ell = 0,\dots,L_2-2.
\end{align*}
This leads to a new sequence of simplices $T_0,\dots, T_{L_1 - 1}, T'_{L_1},\dots,T'_{L_1 - 1+L_2} \in \tria' \in \BisecT$. If $T'$ is within this chain, this proves the claim in \ref{itm:RefChains1}. Otherwise, we proceed inductively.
The statement in \ref{itm:RefChains2} follows by \ref{itm:RefChains1}.
\end{proof} 
%
%
  As in~\eqref{eq:GenMaxVerticesSub} we set generation and level of each $m$-subsimplex $S = \simplex{v_0,\dots, v_m}$ as
  \begin{align}\label{eq:gN-for-sub}
    \genN(S) &\coloneqq \gen(v_0) =  \max_{v \in \vertices(S)} \genN(v)\quad\text{and}\quad   \levelN(S) \coloneqq \max_{v \in \vertices(S)} \levelN(v).
  \end{align}
Our notion of generation $\genN(T)$ for $T\in \mtree$ does not coincide with the notion of generation $\mathtt{gen}(T)$ defined in many publications as number of bisections of an initial simplex $T_0\in \tria_0$ needed to create $T$.   
However, the level coincides with the notion of level used for example in \cite{DieningStornTscherpel23}, that is, 
  \begin{align*}
    \levelN(T) = \lceil \gen(T)/N \rceil  &= \lceil \mathtt{gen}(T)/n \rceil \eqqcolon \mathtt{lvl}(T) \qquad \text{for all $T \in \mtree$.}
  \end{align*}
  The identity follows from the fact that for given~$T_0 \in \tria_0$ both~$\levelN$ and $\level$ increase at the first bisection and then exactly after every $n$-th consecutive bisection. Additionally this shows that the $\Refine(\tria,T)$ routine with $T\in \tria \in \BisecT$ increases the level of the descendants of $T$ by at most one, that is,
  \begin{align}\label{eq:levelIncrease}
  \level(T') + 1 \leq \level(T)\qquad\text{for all }T' \in \Refine(\tria,T)\text{ with }T'\subset T.
\end{align}   
Without proof we state the following simple facts for the level 
\begin{equation*}
\level(S) \coloneqq \levelN(S) \coloneqq  \max_{v \in \vertices(S)} \levelN(v).
\end{equation*}
\begin{lemma}[Levels in subsimplices]
  \label{lem:too-simple-to-be-proved}
  Let $\tria_0$ be $N{+}1$-colored.
  \begin{enumerate}
  \item \label{lem:too-simple-to-be-proved-1}%
    If $T \in \mtree$ and $S= \simplex{v_0,\dots, v_m}\subset T$ is a subsimplex, then
    \begin{align*}
      \level(v_0) \geq \level(v_1) \geq \dots \geq \level(v_{m-1}) \geq \level(v_0)-1.
    \end{align*}
  \item \label{lem:too-simple-to-be-proved-2}%
    If $T \in \mtree$ and $S= \simplex{v_0,\dots, v_m}\subset T$ is a subsimplex, then
    \begin{align*}
      \level(\bsv(S)) =  \level(\bse(S)) +1 = \level(v_{m-1}) +1.
    \end{align*}
  \end{enumerate}
\end{lemma}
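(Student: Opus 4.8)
The plan is to deduce both statements from the definition~\eqref{eq:leveltypeN} of the level together with the formulas in Algorithm~\ref{algo:subsimplex} (with the ambient dimension replaced by the number of colors~$N$). I first prove part~\ref{lem:too-simple-to-be-proved-2}, which is a purely local computation, and then obtain part~\ref{lem:too-simple-to-be-proved-1} by induction over the bisection process, invoking part~\ref{lem:too-simple-to-be-proved-2} in the induction step. Throughout, $\level$ denotes $\levelN$, cf.~\eqref{eq:level-equals-levelN} and~\eqref{eq:gN-for-sub}. The starting observation is that \eqref{eq:leveltypeN} together with $\typeN(\bigcdot)\in\set{1,\dots,N}$ gives $\levelN(v)=\lceil\genN(v)/N\rceil$, so $\levelN$ is a nondecreasing function of $\genN$; in particular, for a subsimplex $S=\simplex{v_0,\dots,v_m}\subset T\in\mtree$ sorted by decreasing generation we get $\level(v_0)\geq\level(v_1)\geq\dots\geq\level(v_m)$ for free, which already settles every inequality in part~\ref{lem:too-simple-to-be-proved-1} except $\level(v_{m-1})\geq\level(v_0)-1$.

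For part~\ref{lem:too-simple-to-be-proved-2} I first note that the generation $\genN(\bsv(S))$ assigned by Algorithm~\ref{algo:subsimplex} depends only on the two endpoints of $e\coloneqq\bse(S)$ (and on their levels and types), and that it equals $\genN(\bsv(e))$ obtained by bisecting the edge $e$ itself; hence it suffices to take $m=1$ and bisect an edge $e=\simplex{w_0,w_1}$ with $\genN(w_0)>\genN(w_1)$. If $\level(w_1)\neq\level(w_0)$, then $\genN(\bsv(e))=\genN(w_0)+N$, so $\level(\bsv(e))=\level(w_0)+1$, while $\level(e)=\level(w_0)$ by monotonicity. If $\level(w_1)=\level(w_0)=:\ell$, then $\genN(\bsv(e))=\genN(w_1)+2N+1-\typeN(w_0)$; substituting $\genN(w_i)=N(\ell-1)+\typeN(w_i)$ gives $\genN(\bsv(e))=N(\ell+1)+\bigl(\typeN(w_1)+1-\typeN(w_0)\bigr)$, and since $\genN(w_0)>\genN(w_1)$ forces $\typeN(w_0)>\typeN(w_1)$, the correction term lies in $\set{2-N,\dots,0}$, so again $\level(\bsv(e))=\ell+1=\level(e)+1$. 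For general $m$ one endpoint of $\bse(S)$ is $v_{m-1}$ in the first branch of Algorithm~\ref{algo:subsimplex} (where $\level(v_{m-1})>\level(v_m)$), and both endpoints have level $\level(v_{m-1})=\level(v_m)$ in the second; hence $\level(\bse(S))=\level(v_{m-1})$ and $\level(\bsv(S))=\level(\bse(S))+1=\level(v_{m-1})+1$ in all cases.

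It remains to prove $\level(v_{m-1})\geq\level(v_0)-1$ for $S=\simplex{v_0,\dots,v_m}\subset T\in\mtree$, by induction on the number of bisections needed to obtain $T$ from $\tria_0$. If $T\in\tria_0$, the vertex colors are distinct, $\genN=-\frc$, and only the vertex of color $N$ (if present) has level $-1$; being the one of smallest generation it equals $v_m$, so $v_0,\dots,v_{m-1}$ all have level $0$ and the claim holds. Now let $T$ be a child of $\hat T=\simplex{u_0,\dots,u_n}\in\mtree$ and set $w\coloneqq\bsv(\hat T)$. By Lemma~\ref{lem:well-posed}, $\genN(w)>\genN(u_i)$ for all $i$, so $w$ is the youngest vertex of $T$ and the vertices of $T$ have pairwise distinct generations; by part~\ref{lem:too-simple-to-be-proved-2} applied to $\hat T$ via Algorithm~\ref{algo:subsimplex-generalized}, $\level(w)=\level(u_{n-1})+1$. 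If $w\notin\vertices(S)$ then $\vertices(S)\subseteq\vertices(\hat T)$, so $S$ is a subsimplex of $\hat T$ and the bound follows from the induction hypothesis. If $w\in\vertices(S)$ and $m\geq2$ (the cases $m\leq1$ being trivial), then $w=v_0$, and $v_1,\dots,v_m$ are vertices of $\hat T$; moreover $u_n$, if it lies in $\vertices(S)$, has the strictly smallest generation there and hence equals $v_m$. Since, by the structure of the bisection, one endpoint of $\bse(\hat T)$ is $u_n$ and the other carries an index $\leq n-1$, in either case $v_{m-1}\in\set{u_0,\dots,u_{n-1}}$, whence monotonicity of the level within $\hat T$ gives $\level(v_{m-1})\geq\level(u_{n-1})=\level(w)-1=\level(v_0)-1$.

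I expect the only delicate points to be the level arithmetic in the second branch of Algorithm~\ref{algo:subsimplex} — verifying that the type correction $\typeN(w_1)+1-\typeN(w_0)$ stays in the window $\set{2-N,\dots,0}$, so that the level jumps by exactly one — and the bookkeeping in the induction step, whose crux is the observation that the vertex $u_n$ of $\hat T$ can enter $S$ only as its youngest vertex $v_m$; this is precisely why part~\ref{lem:too-simple-to-be-proved-1} must exclude $v_m$ from the bound.
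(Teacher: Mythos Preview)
Your proof is correct. The paper itself does not provide a proof of this lemma: it introduces the statement with the sentence ``Without proof we state the following simple facts for the level $\level(\bigcdot) = \levelN(\bigcdot)$'', so there is nothing to compare against. Your argument fills this gap cleanly: the reduction of part~\ref{lem:too-simple-to-be-proved-2} to the edge case via the observation that Algorithm~\ref{algo:subsimplex} assigns $\genN(\bsv(S))$ using only the endpoints of $\bse(S)$ is exactly right, the type arithmetic in the second branch is handled correctly (the correction $\typeN(w_1)+1-\typeN(w_0)\in\{2-N,\dots,0\}$ indeed forces the level to jump by exactly one), and the induction for part~\ref{lem:too-simple-to-be-proved-1}---in particular the observation that $u_n$ can only enter $S$ as $v_m$, so that $v_{m-1}\in\{u_0,\dots,u_{n-1}\}$---is the natural way to close the loop using part~\ref{lem:too-simple-to-be-proved-2}.
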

In order to show that the bisection routine defined by Algorithms~\ref{algo:subsimplex} and~\ref{algo:closure-recursive} terminates, we need a few auxiliary results. We set for any $m$-(sub)simplex~$S$ with~$m \geq 1$
\begin{align*}
  \genNsharp(S) \coloneqq \genN(\bsv(S))\qquad\text{and} \qquad
  \levelNsharp(S) \coloneqq \level(\bsv(S)).
\end{align*}
The following lemma states that the bisection edge of a simplex, which is refined before the other edges of the simplex, is the oldest one with respect to~$\genNsharp$. Note that for~$n \geq 3$ the bisection edge is in general not the oldest one with respect to~$\genN$.
\begin{lemma}[Unique $\genNsharp$-oldest edge]
  \label{lem:unique-oldest}
  Let $(\tria_0,\frc)$ be an $N{+}1$-colored initial triangulation and let $T \in \mtree$. Then for every edge $e \in \edges(T) \setminus \bse(T)$ we have 
\begin{equation*} 
  \genNsharp(e) > \genNsharp(\bse(T)).
\end{equation*} 
\end{lemma}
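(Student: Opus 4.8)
The plan is to prove the assertion directly for an arbitrary $m$-subsimplex $S=\simplex{v_0,\dots,v_m}\subseteq T$ with $2\le m\le n$; for $m=1$ there is nothing to show since then $\edges(S)=\{\bse(S)\}$, and the $n$-simplex $T$ itself is the case $m=n$. The first ingredient is the value of $\genNsharp$ on a single edge: running Algorithm~\ref{algo:subsimplex-generalized} on a $1$-simplex $\simplex{w_0,w_1}$ with $\genN(w_0)>\genN(w_1)$ -- which is well defined by Lemma~\ref{lem:well-posed} -- gives
\begin{align*}
  \genNsharp(\simplex{w_0,w_1}) =
  \begin{cases}
    \genN(w_0) + N & \text{if } \levelN(w_0)\neq\levelN(w_1),\\
    \genN(w_1) + 2N + 1 - \typeN(w_0) & \text{if } \levelN(w_0)=\levelN(w_1).
  \end{cases}
\end{align*}
I will apply this to every edge $e=\simplex{v_a,v_b}$ of $S$ (so $a<b$ and hence $\genN(v_a)>\genN(v_b)$), and it also describes $\genNsharp(\bse(S))$ once $\bse(S)$ has been identified.

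The second step reads off the level structure of $S$. Put $\ell\coloneqq\levelN(v_0)$. By the first part of Lemma~\ref{lem:too-simple-to-be-proved} the levels of $v_0,\dots,v_{m-1}$ are non-increasing and lie in $\{\ell,\ell-1\}$, so there is a threshold $p\in\{1,\dots,m\}$ with $\levelN(v_i)=\ell$ for $i<p$ and $\levelN(v_i)=\ell-1$ for $p\le i\le m-1$; inside each of these two blocks the generations $\genN(v_i)$ strictly decrease, hence by~\eqref{eq:leveltypeN} so do the types $\typeN(v_i)\in\{1,\dots,N\}$. Moreover the second part of Lemma~\ref{lem:too-simple-to-be-proved} gives $\levelN(\bse(S))=\levelN(v_{m-1})$, and $\bse(S)$ always contains the oldest vertex $v_m$; combining this with Algorithm~\ref{algo:subsimplex-generalized} forces $\bse(S)$ into one of four situations: (A$_1$) $p=m$, $\levelN(v_m)\le\ell-1$, and $\bse(S)=\simplex{v_{m-1},v_m}$; (A$_2$) $p\le m-1$, $\levelN(v_m)\le\ell-2$, and $\bse(S)=\simplex{v_{m-1},v_m}$; (B$_1$) all vertices at level $\ell$ and $\bse(S)=\simplex{v_0,v_m}$; (B$_2$) $p\le m-1$, the vertices $v_p,\dots,v_m$ all at level $\ell-1$, and $\bse(S)=\simplex{v_p,v_m}$. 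Note that the oldest vertex $v_m$ may sit strictly below level $\ell-1$ (case A$_2$ really occurs), so one cannot shortcut this by claiming that all vertices of $S$ lie on two consecutive levels.

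The third step is the comparison. Fixing one of the four cases, writing $\genNsharp(\bse(S))$ out through~\eqref{eq:leveltypeN}, and splitting an arbitrary edge $e=\simplex{v_a,v_b}\in\edges(S)\setminus\{\bse(S)\}$ according to whether its endpoints have levels $(\ell,\ell)$, $(\ell-1,\ell-1)$ or $(\ell,\ell-1)$ -- or $v_b=v_m$ lies even lower, which merely puts $e$ into the first branch of the edge formula -- the inequality $\genNsharp(e)>\genNsharp(\bse(S))$ collapses in each case to an elementary estimate among the types $\typeN(v_\bullet)\le N$, dispatched using only that the types are bounded by $N$ and strictly decrease within a level block. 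I expect the tightest point to be case (B$_2$) tested against a mixed edge $e=\simplex{v_a,v_b}$ with $\levelN(v_a)=\ell$ and $\levelN(v_b)=\ell-1$: here the crude level-band bound gives only $\genNsharp(e)\ge\genNsharp(\bse(S))$, and to upgrade it to a strict inequality one must exploit the strict chain $\typeN(v_p)\ge\typeN(v_m)+(m-p)\ge\typeN(v_m)+1$, which is available precisely because $p\le m-1$ in case (B$_2$). The remaining cases are analogous and carry more slack.
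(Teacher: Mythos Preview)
Your direct approach is correct: the edge formula you derive for $\genNsharp(\simplex{w_0,w_1})$ is precisely what the subsimplex bisection rule yields when specialised to $m=1$ (strictly speaking this is Algorithm~\ref{algo:subsimplex} with $N$ in place of $n$, not Algorithm~\ref{algo:subsimplex-generalized}, which is stated only for $n$-simplices; the formula itself is unaffected), the four-case split (A$_1$)--(B$_2$) is exhaustive, and the type inequalities you indicate do close each case. Your identification of (B$_2$) against a mixed-level edge as the tightest spot is accurate: there the claim reduces to $\typeN(v_a)+\typeN(v_p)>\typeN(v_m)+1$, which indeed requires the strict ordering $\typeN(v_p)\ge\typeN(v_m)+1$ coming from $p<m$, just as you note. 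It is also good that you flagged case (A$_2$) as genuinely occurring; the oldest vertex of an $n$-subsimplex in the $(N{+}1)$-colored setting can sit two levels below the youngest.

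The paper takes an entirely different route. It invokes the virtual extension $\tria_0^+$ in $\setR^N$, observes that the $\genNsharp$-generation of an edge of $T$ coincides with the $\gensharp$-generation of the same edge viewed inside a colored $N$-simplex $T^+\supset T$ with $\bse(T^+)=\bse(T)$, and then cites \cite[Lem.~3.12]{DieningStornTscherpel23}, which is exactly the present lemma in the colored case. This yields a two-line argument at the price of relying on the virtual-extension dictionary and an external reference. Your argument is self-contained and elementary, avoiding both; the trade-off is a longer case analysis that has to be written out in full rather than sketched, since a reader should not be asked to reconstruct the four-by-three table of comparisons.
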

\begin{proof}
By design, the $\genNsharp$ generation of edges in $\tria_0$ equals the $\gensharp$ generation in its virtual extension $\tria_0^+$. 
Let $T\in \mtree$ be a subsimplex of a virtual extension $T^+$ resulting from bisections of an $N$-simplex in $\tria_0^+$ such that $\bse(T) = \bse(T^+)$. According to \cite[Lem.~3.12]{DieningStornTscherpel23} the $\gensharp$ generation of the bisection edge $\bse(T^+)$ is strictly smaller than the $\gensharp$ generation of all other edges of $T^+$. 
Since all edges in $T$ are also edges in $T^+$, this property extends to the $\genNsharp$ generation of edges $T$ and so concludes the proof. 
\end{proof}
Lemma~\ref{lem:unique-oldest} leads to the following adjusted version of \cite[Cor.~4.9]{Stevenson08}. 
\begin{corollary}[Monotonicity]\label{cor:Monoton}
Let $(\tria_0,\frc)$ be $N{+}1$-colored.
Suppose $T_0,T_1\in \mtree$ are simplices with bisection edge $\bse(T_0)$ contained in $T_1$, i.e.~$\bse(T_0) \in \edges( T_1)$. Then one of the following alternatives holds:
  \begin{enumerate}
  \item $\bse(T_0) = \bse(T_1)$ and $\genNsharp(T_0) = \genNsharp(T_1)$, \label{itm:refChain1}
  \item $\bse(T_0) \neq \bse(T_1)$ and $\genNsharp(T_0) > \genNsharp(T_1)$.\label{itm:refChain2}
  \end{enumerate}
\end{corollary}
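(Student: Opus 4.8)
The plan is to deduce Corollary~\ref{cor:Monoton} directly from Lemma~\ref{lem:unique-oldest} and the well-posedness Lemma~\ref{lem:well-posed}, after one short bookkeeping step that re-expresses the quantity $\genNsharp(\bigcdot)$ of an $n$-simplex as a quantity attached to its bisection edge. Concretely, I would first record the identity
\begin{align*}
\genNsharp(S) = \genN(\bsv(S)) = \genN(\textup{mid}(\bse(S))) = \genNsharp(\bse(S)),
\end{align*}
valid for every $m$-simplex $S$ with $m \geq 1$; here the last equality uses that an edge $e$ is its own bisection edge, so $\bse(e) = e$ and $\bsv(e) = \textup{mid}(e)$. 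Applied to $T_0$ and $T_1$ this gives $\genNsharp(\bse(T_0)) = \genNsharp(T_0)$ and $\genNsharp(\bse(T_1)) = \genNsharp(T_1)$, reducing the statement to a comparison of the edge-quantities $\genNsharp(\bse(T_0))$ and $\genNsharp(\bse(T_1))$.

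Next I would split along the stated dichotomy. If $\bse(T_0) = \bse(T_1) =: e$, then $\bsv(T_0) = \textup{mid}(e) = \bsv(T_1)$ as a point of $\RRn$, and Lemma~\ref{lem:well-posed} (the generation of a bisection vertex does not depend on which simplex one bisects to create it) yields $\genNsharp(T_0) = \genN(\textup{mid}(e)) = \genNsharp(T_1)$, which is alternative~\ref{itm:refChain1}. If instead $\bse(T_0) \neq \bse(T_1)$, then from the hypothesis $\bse(T_0) \in \edges(T_1)$ we get $\bse(T_0) \in \edges(T_1) \setminus \bse(T_1)$, so Lemma~\ref{lem:unique-oldest} applied to $T_1$ gives $\genNsharp(\bse(T_0)) > \genNsharp(\bse(T_1))$; combining with the identity above turns this into $\genNsharp(T_0) > \genNsharp(T_1)$, which is alternative~\ref{itm:refChain2}.

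Once the two lemmas are in hand the argument is immediate, so I do not expect a genuine obstacle. The only points requiring care are the bookkeeping step $\genNsharp(S) = \genNsharp(\bse(S))$ — this is what lets the edge-generation statement of Lemma~\ref{lem:unique-oldest} be transferred to $n$-simplices — and checking that the inequality furnished by Lemma~\ref{lem:unique-oldest} is oriented as needed in~\ref{itm:refChain2} (the bisection edge is $\genNsharp$-oldest, hence the \emph{other} edge $\bse(T_0)$ of $T_1$ is strictly younger, i.e.\ has strictly larger $\genNsharp$).
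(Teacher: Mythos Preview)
Your proof is correct and follows essentially the same route as the paper: the paper's proof is the one-liner ``Since $\genNsharp(T_j) = \genNsharp(\bse(T_j))$ for $j=0,1$, the statement is an immediate consequence of Lemma~\ref{lem:unique-oldest}.'' Your write-up unpacks this by making the bookkeeping identity explicit and by separately handling the equality case via Lemma~\ref{lem:well-posed}, which is a reasonable extra bit of care but not a different argument.
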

\begin{proof}
 Since $\genNsharp(T_j) = \genNsharp(\bse(T_j))$ for $j=0,1$, the statement is an immediate consequence of Lemma~\ref{lem:unique-oldest}.
 \end{proof}
A consequence of the existence of refinement chains and the previous corollary is the following result.
\begin{corollary}[Limited level increase]\label{cor:levelIncrease}
Let $\tria \in \BisecT$ with $N{+}1$-colored $\tria_0$. Suppose $T$ is a newly created simplex by the refinement \Refine($\tria,M$) with $M\in \tria$ in the sense that $T \in \Refine(\tria,M) \setminus \tria$. Then we have
\begin{align*}
\level(T) \leq \level(M) + 1.
\end{align*} 
\end{corollary}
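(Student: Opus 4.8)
The plan is to reduce the statement about an arbitrary newly created simplex $T$ — which the recursive conforming closure may create far from $M$ — to a statement about the single simplex $M$, by transporting information along a refinement chain. Since $T\in\Refine(\tria,M)\setminus\tria$, it is a child of some simplex bisected during a recursive call of Algorithm~\ref{algo:closure-recursive}, so Lemma~\ref{lem:RefChainsNew}\ref{itm:RefChains2} provides a chain $M=T_0,T_1,\dots,T_J\in\mtree$ with $\bse(T_j)\in\edges(T_{j+1})$ for $j=0,\dots,J-1$ such that $T$ is a child of $T_J$. The whole argument then amounts to comparing the ``level-type'' quantities $\level(T)$, $\levelNsharp(T_J)=\level(\bsv(T_J))$, $\levelNsharp(M)=\level(\bsv(M))$, and $\level(M)$.

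First I would identify $\level(T)$ with $\levelNsharp(T_J)$: whichever child of $T_J$ the simplex $T$ is, its vertex set is $\vertices(T_J)$ with one endpoint of $\bse(T_J)$ replaced by $\bsv(T_J)$, and by Lemma~\ref{lem:well-posed} the generation $\genN(\bsv(T_J))$ strictly exceeds $\genN(v)$ for every $v\in\vertices(T_J)$; since \eqref{eq:leveltypeN} makes the level a non-decreasing function of the generation (indeed $\levelN=\lceil\genN/N\rceil$), the new vertex is also the level-maximal vertex of $T_J$, so $\level(T)=\levelN(\bsv(T_J))=\levelNsharp(T_J)$. Next I would run the monotonicity argument along the chain: applying Corollary~\ref{cor:Monoton} to each pair $(T_j,T_{j+1})$ gives $\genNsharp(T_j)\geq\genNsharp(T_{j+1})$, hence $\genNsharp(M)=\genNsharp(T_0)\geq\genNsharp(T_J)$, and again by monotonicity of the level in the generation $\levelNsharp(T_J)\leq\levelNsharp(M)$. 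Finally, Lemma~\ref{lem:too-simple-to-be-proved}\ref{lem:too-simple-to-be-proved-2} applied to $S=M$ yields $\levelNsharp(M)=\level(\bsv(M))=\level(\bse(M))+1\leq\level(M)+1$, because the level of the edge $\bse(M)\subset M$, being the maximum of the levels of its two vertices, is at most $\level(M)$. Chaining the three estimates gives $\level(T)=\levelNsharp(T_J)\leq\levelNsharp(M)\leq\level(M)+1$.

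I expect the main obstacle to be conceptual bookkeeping rather than technical difficulty: one must recognize that the quantity to track along the refinement chain is $\genNsharp$ — the generation of the bisection \emph{vertex} — and not $\genN$ of the simplices, since it is precisely $\genNsharp$ for which monotonicity along chains holds (this is what Corollary~\ref{cor:Monoton}, hence the unique-$\genNsharp$-oldest-edge Lemma~\ref{lem:unique-oldest}, is tailored to), and one must see that this same quantity pins down $\level(T)$ because the newly inserted vertex is generation-maximal in its parent. Once that identification is in place the proof is a short chain of inequalities, and it generalizes the estimate \eqref{eq:levelIncrease}, which treats only simplices $T\subset M$. A minor point worth stating explicitly is that the chain members $T_j$ furnished by Lemma~\ref{lem:RefChainsNew} need not lie in $\tria$ but may belong to intermediate triangulations; this is harmless, since Corollary~\ref{cor:Monoton} and Lemma~\ref{lem:too-simple-to-be-proved} require only membership in $\mtree$.
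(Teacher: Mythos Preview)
Your proof is correct and follows the same route as the paper: invoke Lemma~\ref{lem:RefChainsNew}\ref{itm:RefChains2} to obtain a chain $M=T_0,\dots,T_J$ ending at the parent of $T$, apply Corollary~\ref{cor:Monoton} along the chain to get $\genNsharp(T_J)\le\genNsharp(M)$, and convert this into the desired level bound. Your passage through $\level(T)=\levelNsharp(T_J)\le\levelNsharp(M)\le\level(M)+1$ via Lemma~\ref{lem:too-simple-to-be-proved} spells out more explicitly what the paper compresses into ``$\level(P)\le\level(M)$'' together with~\eqref{eq:levelIncrease}.
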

\begin{proof}
Let $T\in \Refine(\tria,M) \setminus \tria$. Lemma~\ref{lem:RefChainsNew} states the existence of a refinement chain connecting $M$ with the parent $P\in \tria$ of $T$.  Corollary~\ref{cor:Monoton} shows
\begin{align*}
\genNsharp(P) \leq \genNsharp(M)\qquad\text{and thus}\qquad \level(P) \leq \level(M).
\end{align*}
The corollary then follows from the estimate in \eqref{eq:levelIncrease}.
\end{proof}
With these preliminary considerations we are able to verify the  basic properties of~$\BisecT$ stated in Theorem~\ref{thm:basic-properties}.
\begin{proof}[Proof of Theorem~\ref{thm:basic-properties}]
Let $\tria_0$ be an $N{+}1$-colored initial triangulation.

\textit{Proof of \ref{itm:basic-colored-terminate}}.
Lemma~\ref{lem:RefChainsNew} shows that each simplex that is flagged for refinement is connected to $T$ by a refinement chain. Due to Corollary~\ref{cor:Monoton} the $\genNsharp$-generation of the simplices in this chain decreases strictly monotonically. This proves that the length of these chains is bounded. Moreover, each bisection increases the $\genNsharp$ generation of the new simplices. When the $\genNsharp$ generation exceeds $\genNsharp(T)$, the simplex cannot be bisected anymore according to  Lemma~\ref{lem:RefChainsNew} and Corollary~\ref{cor:Monoton}. This proves that after a finite number of bisections the routine in Algorithm~\ref{algo:closure-recursive} terminates.

\textit{Proof of \ref{itm:basic-colored-shape}}.
According to Theorem~\ref{thm:EquiRefinements} the bisection routine on each simplex equals Maubach's bisection routine in Algorithm~\ref{algo:maubach}. 
The existence of maximal $n!n2^{n-2}$ classes of similar simplices in $T$ resulting from successive applications of this routine to an initial simplex $T_0\in \tria_0$  is known, see \cite[Thm.~4.5]{AMP.2000}. The estimate in \ref{itm:basic-colored-shape} is proven in the appendix.

\textit{Proof of \ref{itm:basic-colored-uniform}}. 
The property in~\ref{itm:basic-colored-uniform} results from the property that uniform refinements of the virtual (colored) extension $\tria_0^+$ of $\tria_0$ are conforming, see Theorem~\ref{thm:basic-properties}\ref{itm:basic-colored-uniform} with $N=n$ which has been proven in~\cite{Maubach95,Traxler97,Stevenson08}. Since $N$ refinements in $\tria_0^+$ correspond to $n$  refinements in $\tria_0$, this yields the property in~\ref{itm:basic-colored-uniform}.

\textit{Proof of \ref{itm:basic-colored-lattice}}.
The proof of~\ref{itm:basic-colored-lattice} is done as in case of colored initial triangulations~$\tria_0$ in \cite{DieningKreuzerStevenson16,DieningStornTscherpel23} and \cite[Cor.~4.18 in Sec.~4.4.2]{Gehring}.
\end{proof}
An immediate consequence of Theorem~\ref{thm:basic-properties} is the following observation.
\begin{corollary}[Level and diameter]\label{cor:GensharpToSize}
Let $\tria\in \BisecT$ with $N{+}1$-colored initial triangulation $\tria_0$.
There exist constants $0 < c_{sh}\leq C_{sh}< \infty$ with
\begin{align*}
c_{sh} 2^{-\level(T)} \leq \textup{diam}(T) \leq C_{sh} 2^{-\level(T)} \qquad\text{for all }T\in \tria.
\end{align*}
The ratio $C_{sh}/c_{sh} \lesssim 1$ depends on the dimension $n$, the shape regularity of $\tria_0$, and the quasi-uniformity $\max_{T,K\in \tria_0} |T|/|K| \eqsim 1$, but not on $N$. 
\end{corollary}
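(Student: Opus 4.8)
The plan is to relate the diameter of a simplex $T\in\tria$ to its level by passing through its ancestor $T_0\in\tria_0$ and tracking how the diameter shrinks under bisection. First I would observe that, by Theorem~\ref{thm:basic-gcolored}~\ref{itm:basic-gcolored-shape}, there are only finitely many similarity classes in $\mtree$ for each $T_0\in\tria_0$, and by Remark~\ref{rem:level-eq-levelN} the level increases by exactly one after every $n$ consecutive bisections (and at the very first bisection). Hence, if $T\subset T_0$ with $\level(T)=\level(T_0)+\ell$, then $T$ arises from $T_0$ by a number of bisections $b$ with $(\ell-1)n < b \leq \ell n$ (with the obvious adjustment at $\ell=0$), and since each full round of $n$ bisections of a Kuhn-type simplex halves all edge lengths (this is the Tucker--Whitney scaling, and it holds for the virtual extension $\tria_0^+$ hence for its subsimplices in $\tria_0$), we get $\diam(T)\eqsim 2^{-\ell}\diam(T_0)$ with equivalence constants depending only on $n$ — here one uses that $b$ and $\ell n$ differ by at most $n$, and that within a single round the diameter changes only by a bounded factor, which is controlled by the finite number of similarity classes and shape regularity.

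Next I would convert $\diam(T_0)$ and $\level(T_0)$ into absolute constants. Since $\tria_0$ is finite, $\diam(T_0)\eqsim 1$ with constants depending on $\tria_0$; more precisely, using the quasi-uniformity constant $C_{qu}(\tria_0)$ from~\eqref{eq:quasi_unif} together with shape regularity, all $\diam(T_0)$ are comparable up to a factor bounded in terms of $C_{qu}(\tria_0)$ and $\gamma(\tria_0)$. It remains to pin down $\level(T_0)$ for $T_0\in\tria_0$: by the definition $\genN(v)=-\frc(v)$ and $\levelN(v)\in\{-1,0\}$ for initial vertices, together with~\eqref{eq:gN-for-sub}, every $T_0\in\tria_0$ has $\level(T_0)=\levelN(T_0)=0$ (it has at least one vertex of color $<N$, hence of level $0$, and no vertex of positive level). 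Therefore $\level(T)=\ell$ directly, and combining with the previous step yields
\begin{align*}
d\,2^{-\level(T)} \leq \diam(T) \leq D\,2^{-\level(T)}
\end{align*}
with $d,D$ depending on $n$, on the shape regularity of $\tria_0$, and on $C_{qu}(\tria_0)$. The claim $D/d\lesssim 1$ independent of $N$ follows because the only places $N$ could enter — the number of similarity classes and the per-round scaling factor — are bounded purely in terms of $n$ by Theorem~\ref{thm:basic-gcolored}~\ref{itm:basic-gcolored-shape}.

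The main obstacle I anticipate is the bookkeeping in the first paragraph: making precise that "one round of $n$ bisections halves the diameter" for a subsimplex $T\in\tria_0$ and all its descendants, rather than only for the full $n$-simplices of the Kuhn triangulation. The clean way around this is to invoke the virtual extension $\tria_0^+$: in $\tria_0^+$ the simplices are (affine images of) Kuhn simplices with the standard Maubach tagging, $N$ bisections produce the Tucker--Whitney refinement which exactly halves all edge lengths, and $N$ bisections in $\tria_0^+$ correspond to $n$ bisections of the subsimplex $T\in\tria_0$ (as used in the proof of Theorem~\ref{thm:basic-gcolored}~\ref{itm:basic-gcolored-uniform}); intermediate steps are controlled by the bounded number of similarity classes. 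A secondary, purely technical point is handling the first bisection separately, since the level already jumps from $0$ after a single bisection rather than after $n$; this only shifts the exponent by a bounded amount and is absorbed into $d$ and $D$.
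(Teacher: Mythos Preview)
Your proposal is correct and follows essentially the same outline as the paper's proof: relate the number of bisections to the level via Remark~\ref{rem:level-eq-levelN}, use shape regularity (Theorem~\ref{thm:basic-gcolored}~\ref{itm:basic-gcolored-shape}) and quasi-uniformity~\eqref{eq:quasi_unif}, and conclude that the hidden constants are independent of~$N$.

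The one genuine difference is in the quantity you track. The paper works with \emph{volume}: each bisection exactly halves $|T|$, so after $b$ bisections $|T| = 2^{-b}|T_0|$, and the level count gives $(\level(T)-1)n+1 \le b \le \level(T)\,n$, hence
\[
2^{-\level(T)n}|T_0| \le |T| \le 2^{n-1-\level(T)n}|T_0|.
\]
Shape regularity then converts this to a diameter bound, and quasi-uniformity handles the $|T_0|$ factor. By contrast, you track \emph{diameter} directly and argue that a full round of $n$ bisections halves it, invoking the virtual extension and the finiteness of similarity classes. This works, but note that for a general (non-Kuhn) $T_0$ the descendants after $n$ bisections are not literally similar to $T_0$---the affine map to the Kuhn simplex does not preserve similarity---so the halving is only up to a factor controlled by the shape regularity of~$T_0$, which you do acknowledge. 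The volume route is cleaner precisely because volume halves exactly at every bisection regardless of shape, so no appeal to the virtual extension or to intermediate similarity classes is needed until the final conversion to diameter.
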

\begin{proof}
Let $\tria\in \BisecT$ with $N{+}1$-colored initial triangulation $\tria_0$. 
Similar considerations as in Theorem~\ref{thm:basic-properties}~\ref{itm:basic-colored-uniform} show that the first of each $n$ consecutive bisections increases the level of a simplex,
leading for any descendant $T\in \tria$ of a simplex $T_0 \in \tria_0$ to
\begin{align*}
2^{-\level(T)n} |T_0| \leq  |T| \leq 2^{n-1-\level(T)n } |T_0|.
\end{align*}
Combining this inequality with the shape regularity in Theorem~\ref{thm:basic-properties}~\ref{itm:basic-colored-shape} and the quasi-uniformity $\max_{T,K\in \tria_0} |T|/|K| \eqsim 1$ concludes the proof.
\end{proof}
\begin{lemma}[Distance]\label{lem:distance of simplices}
Let $M \in \tria\in \BisecT$ with $N{+}1$-colored $\tria_0$ and recall the constant $C_{sh}$ from Corollary~\ref{cor:GensharpToSize}. Any new simplex $T\in\BisectionClosure(\tria,M)\setminus \tria$ satisfies
\begin{align*}
\dist(T,M)\coloneqq \inf_{x'\in T,x\in M}\lvert x'-x\rvert \leq 4 C_{sh} N 2^{- \level(T)}.
\end{align*}
\end{lemma}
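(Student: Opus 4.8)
The plan is to connect $M$ to the parent of $T$ by the refinement chain of Lemma~\ref{lem:RefChainsNew}, to estimate the distance by the total diameter of the links of this chain, and to make the resulting sum converge with the stated constant by exploiting the generation/level bookkeeping.

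If $T$ is a child of $M$ then $T\subset M$ and $d(T,M)=0$, so assume otherwise. By Lemma~\ref{lem:RefChainsNew}\,\ref{itm:RefChains2} there is a chain $M = T_0, T_1,\dots,T_J \in \mtree$ with $\bse(T_j)\in\edges(T_{j+1})$ for $0\le j<J$ and with $T$ a child of $T_J$. Since a simplex is flagged in Algorithm~\ref{algo:closure-recursive} only when its bisection edge differs from the one currently being refined, we may take the chain with $\bse(T_j)\ne\bse(T_{j+1})$ for all $j$ (cf.\ the proof of Lemma~\ref{lem:RefChainsNew}). Corollary~\ref{cor:Monoton} then yields $\genNsharp(T_0) > \genNsharp(T_1) > \dots > \genNsharp(T_J)$; in particular these values are pairwise distinct, and by~\eqref{eq:leveltypeN} (where $\typeN$ takes values in $\set{1,\dots,N}$) the levels satisfy $\levelNsharp(T_0)\ge\levelNsharp(T_1)\ge\dots\ge\levelNsharp(T_J)$.

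For the geometric estimate note that $\bse(T_j)\subset T_j\cap T_{j+1}$ for $j<J$ and $T\subset T_J$, so starting from a vertex of $\bse(T_0)\subset M$ and hopping along the shared bisection edges produces a point of $T$ within $\sum_{j=1}^{J}\textup{diam}(T_j)$ of a point of $M$; hence $d(T,M)\le\sum_{j=1}^{J}\textup{diam}(T_j)$. By Corollary~\ref{cor:GensharpToSize} (which applies to every $T_j\in\mtree$) we have $\textup{diam}(T_j)\le D\,2^{-\level(T_j)}$, and Lemma~\ref{lem:too-simple-to-be-proved} gives $\level(T_j)\ge\levelNsharp(T_j)-1$, so $\textup{diam}(T_j)\le 2D\,2^{-\levelNsharp(T_j)}$. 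Moreover, $\bsv(T_J)\in\vertices(T)$ has by Lemma~\ref{lem:well-posed} strictly larger generation than every other vertex of $T$ (these are inherited from $T_J$), hence by~\eqref{eq:leveltypeN} at least as large a level; therefore $\level(T)=\level(\bsv(T_J))=\levelNsharp(T_J)$. Together with the monotonicity of the previous paragraph this gives $\levelNsharp(T_j)\ge\level(T)$ for all $j$.

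It remains to count: for each fixed level value $\ell$, a link $T_j$ with $\levelNsharp(T_j)=\ell$ satisfies $\genNsharp(T_j)=N(\ell-1)+\typeN(\bsv(T_j))$ with $\typeN(\bsv(T_j))\in\set{1,\dots,N}$, and since the $\genNsharp(T_j)$ are pairwise distinct there are at most $N$ such indices $j$. Hence
\[
 d(T,M)\ \le\ \sum_{j=1}^{J}\textup{diam}(T_j)\ \le\ 2D\sum_{j=1}^{J}2^{-\levelNsharp(T_j)}\ \le\ 2DN\sum_{\ell\ge\level(T)}2^{-\ell}\ =\ 4DN\,2^{-\level(T)}.
\]
I expect the delicate points to be the reduction to a chain with pairwise distinct bisection edges — this is what turns the weak monotonicity of Corollary~\ref{cor:Monoton} into the \emph{strict} decay of $\genNsharp$ and hence into the bound of $N$ links per level — and the identity $\level(T)=\levelNsharp(T_J)$ together with the ensuing level monotonicity along the chain; the polygonal‑path estimate and the geometric summation are then routine.
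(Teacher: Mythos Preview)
Your proof is correct and follows essentially the same route as the paper: build the refinement chain from Lemma~\ref{lem:RefChainsNew}, invoke Corollary~\ref{cor:Monoton} for strict decay of $\genNsharp$ along the chain, count at most $N$ links per $\levelNsharp$-value, and sum the diameters geometrically via Corollary~\ref{cor:GensharpToSize}. The only cosmetic difference is your use of the clean identity $\level(T)=\levelNsharp(T_J)$ (from Lemma~\ref{lem:well-posed}) in place of the paper's inequality $\level(T)\le\level(T_J)+1$ from~\eqref{eq:levelIncrease}; both lead to the same constant $4DN$.
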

\begin{proof}
Let $M\in \tria\in \BisecT$ and let $T \in \Refine(\tria,M)\setminus \tria$.
Lemma~\ref{lem:RefChainsNew}~\ref{itm:RefChains2} yields the existence a simplicial chain $T_0,\dots,T_J\in \mtree$ with $J\in \mathbb{N}_0$ such that $M = T_0$, $T$ is a child of $T_J$, and
\begin{align*}
\bse(T_j) \in \edges(T_{j+1})\qquad\text{for all }j=0,\dots,J-1.
\end{align*}
In addition we have due to Corollary~\ref{cor:Monoton} the property
\begin{align*}
\genNsharp(T_{j+1}) < \genNsharp(T_j)\qquad\text{for all }j=0,\dots,J-1.
\end{align*}
Since the $\genNsharp$ generation increases at most $N$ times until
the related $\levelNsharp$ level increases and $\levelNsharp (T_j) = \level(T_j) + 1$, we obtain the upper bound
\begin{align*}
\dist(T,M) &\leq \sum_{j=1}^J \textup{diam}(T_j) \leq C_{sh} \sum_{j=1}^J 2^{-\level(T_j)} \leq  C_{sh}N 2^{-\level(T_J)} \sum_{j=0}^\infty 2^{-j}.
\end{align*}
Using the identity $\sum_{j=0}^\infty 2^{-j} = 2$ and the property 
$\level(T) \leq \level(T_J) + 1$ of the child $T$ concludes the proof.
\end{proof}
For any $\tria\in \BisecT$ with subset $\mathcal{M}\subset \tria$ we denote by $\BisectionClosure(\tria,\mathcal{M})$ the triangulation obtained by refining all simplices in $\mathcal{M}$ in the sense that 
\begin{align*}
\BisectionClosure(\tria,\mathcal{M}) \coloneqq \bigvee_{M\in \mathcal{M}} \BisectionClosure(\tria,M).
\end{align*}
\begin{proof}[Proof of Theorem~\ref{thm:closuregcolored}]
For $N=n$ this result has been shown in \cite[Thm.~2.4]{BinevDahmenDeVore04} and \cite[Thm.~6.1]{Stevenson08}. Our proof modifies the arguments presented therein.

Let us abbreviate $\mathcal M\coloneqq \bigcup_{\ell=0}^{L-1}\mathcal M_\ell$ where $\mathcal{M}_\ell$ denotes the sets of marked simplices as stated in the theorem and $L\in \mathbb{N}$.
Let $E\coloneqq 5C_{sh}N$ and $F \coloneqq C_{sh}+4C_{sh}N$ with constant $C_{sh}$ from Corollary~\ref{cor:GensharpToSize}. We define the neighborhood of any $T \in \tria$ as 
\begin{align*}
\neig(T)\coloneqq \left\lbrace T'\in\mtree \colon \dist(T,T')\leq E 2^{-\level(T)}\right\rbrace.
\end{align*}
We further introduce 
the function $\lambda \colon \mtree \times \mathcal M\to \mathbb R$
\begin{align*}
\lambda(T,M) \coloneqq 
\begin{cases}
F 2^{\level(T)-\level(M)}&\text{if }M \in \neig(T)\text{ and }\level(T) \leq \level(M) + 1,\\
0				&\text{otherwise.}
\end{cases}
\end{align*}
We claim that there exist constants $0 < c$ and $C < \infty$ such that
\begin{align}\label{eq:ProofPart}
\begin{aligned}
\sum_{T\in \tria_L\setminus \tria_0} \lambda(T,M) & \leq C &&\quad \text{for all }M\in \mathcal{M},\\
\sum_{M\in \mathcal M} \lambda(T,M) & \geq c &&\quad \text{for all }T\in \tria_L \setminus \tria_0.
\end{aligned}
\end{align} 
These two estimates yield
\begin{align*}
\#\tria_L-\#\tria_0\leq \#(\tria_L\setminus\tria_0) = \sum_{T\in\tria_L \setminus\tria_0} 1 \leq \frac{1}{c} \sum_{T\in\tria_L\setminus\tria_0}\sum_{ M\in \mathcal M}\lambda(T,M)\leq \frac{C}{c}\,  \#\mathcal M.
\end{align*}
Since this yields the theorem with $\CBDV = C/c$, it remains to verify \eqref{eq:ProofPart}.

Let $M \in \mathcal{M}$ and let $k \leq \level(M) + 1$.
We denote the ball with center $x\in \mathbb{R}^n$ and radius $r>0$ by $B(x,r)\subset \mathbb{R}^n$. 
Corollary~\ref{cor:GensharpToSize} shows that any simplex $T\in \mtree$ with $\level(T) = k$ and $M\in \neig(T)$ is by definition for any $y\in M$ a subset of
\begin{align*}
B_{M,k} \coloneqq \left\lbrace x\in \overline{\Omega} \colon \dist(x,M) \leq (E+C_{sh})2^{-k}\right\rbrace\subset B\big(y,C_{sh} 2^{1-k}+(E+C_{sh})2^{-k}\big).
\end{align*}
The volume of each $T\in \mtree$ with $\level(T) = k$ satisfies due to  Corollary~\ref{cor:GensharpToSize} and shape regularity $c_{sh}^n 2^{-nk} \lesssim |T|$. Thus, the number of such elements in $B_{M,k}$ satisfies
\begin{align*}
\# \lbrace T \in \mtree \colon \level(T) = k \text{ and }M\in \neig(T)\rbrace \lesssim \left(\frac{E+3C_{sh}}{c_{sh}}\right)^n.
\end{align*}
Hence, we obtain the first estimate in \eqref{eq:ProofPart} by 
\begin{align*}
\sum_{T \in \mtree} \lambda(T,M) &= \sum_{k=0}^{\level(M) + 1} \sum_{T \in \mtree, \level(T) = k} \lambda(T,M)\\
& \lesssim \left(\frac{E+3C_{sh}}{c_{sh}}\right)^n F \sum_{k=-1}^{\level(M)} 2^{- k } \leq 4F \left(\frac{E+3C_{sh}}{c_{sh}}\right)^n.
\end{align*}
To prove the lower bound in \eqref{eq:ProofPart}, let $T \in \tria_L\setminus \tria_0$. Moreover, let $T_0,\dots,T_K\in \mtree$ be a sequence of simplices with $T = T_K$ and $T_0 \in \tria_0$ such that $T_j\in \mathcal M_{k_j}$ and $T_{j+1}\in \BisectionClosure(\tria_{k_j},T_j)\setminus \tria_{k_j}$ with $0 \leq k_1 < \dots < k_K < L$.
According to Corollary~\ref{cor:levelIncrease}, the level can increase only by one from one sequence member to the next one in the sense that
\begin{align*}
\level(T_{j+1}) \leq \level(T_j) + 1\qquad\text{for all }j=0,\dots,K-1.
\end{align*}
This and $T_K = T \notin\tria_0$ yield the existence of an index $s\in \lbrace 0,\dots,K-1\rbrace$ with $\level(T_s)\in\{\level(T)-1,\level(T)\}$.
If the sequence $T_0,\dots,T_K$ stays in the neighborhood of $T=T_K$, we have
\begin{align*}
F \leq \lambda(T,T_s) \leq \sum_{j=0}^{K-1} \lambda(T,T_j). 
\end{align*}
Otherwise, let $k\in \lbrace 0,\dots,K\rbrace$ denote the largest index with $T_k \notin \neig(T)$.
The definition of the neighborhood $\neig(T)$, Lemma~\ref{lem:distance of simplices}, and the definition of $\lambda$ show
\begin{align*}
E 2^{-\level(T)}&\leq \dist(T,T_k) \leq \dist(T,T_{K-1})+\sum_{j=k+1}^{K-1} \diameter(T_j)+ \sum_{j=k}^{K-2}\dist(T_{j+1},T_j)\\
			&\leq  4C_{sh}N\,2^{-\level(T)}+C_{sh}\sum_{j=k+1}^{K-1}2^{-\level(T_j)}+ 4C_{sh}N\, \sum_{j=k+1}^{K-1}2^{-\level(T_j)}\\
			&\leq 2^{-\level(T)}\left(4C_{sh}N +\sum_{j=k+1}^{K-1} (C_{sh}+4C_{sh}N)2^{\level(T)-\level(T_j)}\right).
\end{align*}
Due to the definitions $E\coloneqq 5C_{sh}N$ and $F \coloneqq C_{sh}+4C_{sh}N$ this yields
\begin{align*}
c \coloneqq C_{sh} N \leq \sum_{j=0}^{K-1} \lambda(T,T_j).
\end{align*} 
Finally, this results in $\CBDV=C/c\lesssim 4F(E+3C_{sh})^n/(C_{sh}Nc_{sh}^n)\lesssim N^n$.
\end{proof}
%
%
\begin{remark}[Freudenthal's triangulation]
There are meshes like Freudenthal's triangulation consisting of translations of Kuhn cubes that allow for optimal shape regularity and closure estimates.
However, the coloring needed to obtain the corresponding tagged simplices requires many colors, see Figure~\ref{fig:what-does-not-work}. 
\begin{figure}[ht!]
\begin{tikzpicture}
\draw (0,0) grid (5,1);
\foreach \x in {0,...,4} \draw (\x,0)--++(1,1);
\foreach \x in {1,...,5} \draw[dotted] (\x,0)--++(-1,1);
\draw (0,0) node [below]{$N$};
\foreach \x in {0,...,4} \draw (\x+1,0) node [below] {$\x$};
\foreach \x in {0,...,5}	\draw (\x,1) node [above] {$\x$};
\end{tikzpicture}
\caption{
A triangulation which can be tagged satisfying the initial conditions by Binev--Dahmen--DeVore and Stevenson with good shape regularity and closure estimate, but its coloring needs many colors for obtaining the new edges illustrated by the dotted lines.
}
\label{fig:what-does-not-work}
\end{figure}
\end{remark}
%
%
\section{Numerical Experiments}\label{sec:simpl-numer-exper}
We conclude this paper with numerical experiments illustrating the performance of the Maubach routine in Algorithm~\ref{algo:closure-recursive} and~\ref{algo:maubach} with initialization as in Algorithm~\ref{algo:coloring}.
\subsection{Experiment 1 (Properties of the algorithm)}
Our first numerical experiment investigates the properties of our bisection routine for initial triangulations $\tria_0$ and related domains $\Omega$ displayed in Figure~\ref{fig:NetgenMeshes} from the software package Netgen \cite{Schoberl97}.
\begin{figure}[ht!]
\hbox{\begin{minipage}{.2\textwidth}
\includegraphics[scale=.4]{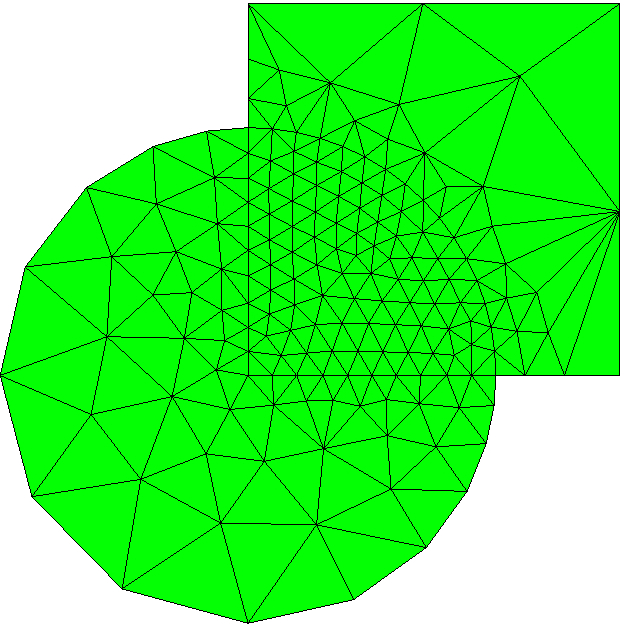}
\end{minipage}
\begin{minipage}{.2\textwidth}
\includegraphics[scale=.4]{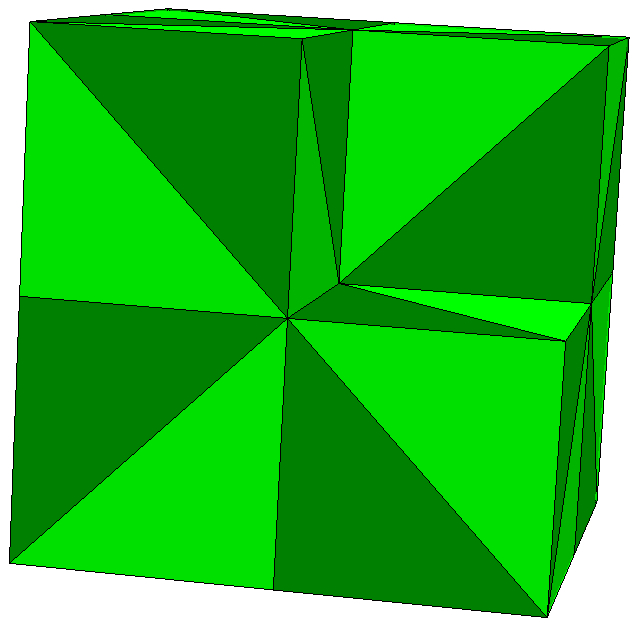}
\end{minipage}
\hspace*{-.4cm}
\begin{minipage}{.2\textwidth}
\includegraphics[scale=.45]{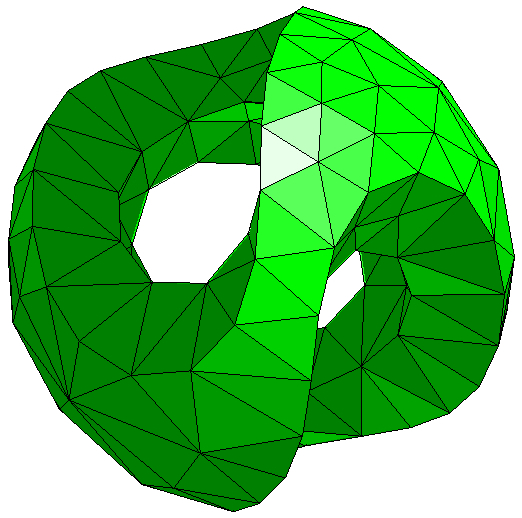}
\end{minipage}
\hspace*{-.5cm}
\begin{minipage}{.2\textwidth}
\includegraphics[scale=.51]{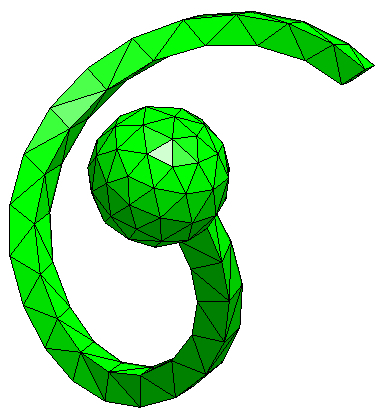}
\end{minipage}
\hspace*{-1.1cm}
\begin{minipage}{.2\textwidth}
\includegraphics[scale=.55]{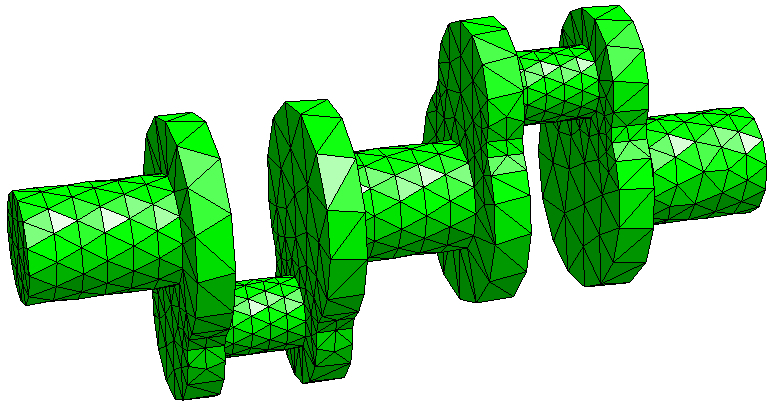}
\end{minipage}}
\caption{Meshes from left to right: 2dMesh, Fichera, Sculpture, Extrusion, Shaft. 
}\label{fig:NetgenMeshes}
\end{figure}
We run the AFEM loop as described in \cite{Stevenson07} with bulk parameter $\Theta = 0.3$ to approximate the Poisson model problem $-\Delta u = 1$ in $\Omega$ with homogeneous Dirichlet boundary condition $u = 0$ on $\partial \Omega$ by the Galerkin finite element method with quadratic Lagrange elements. We stop the AFEM loop when the number of degrees of freedom exceeds $10^5$. 
We apply the bisection routine in Algorithm~\ref{algo:maubach} with closure in Algorithm~\ref{algo:closure-recursive} and display the number of colors $N$ obtained by the initialization with Algorithm~\ref{algo:coloring} in Table~\ref{tab:table}.
Let $\tria_L$ with $L\in \mathbb{N}$ denote the finest mesh and let $\mathcal{M}_\ell$ denote the set of marked elements on the $\ell$-th mesh $\tria_\ell$ with $\ell=0,\dots,L-1$ obtained by the adaptive loop.
Table~\ref{tab:table} further contains the ratio of the shape regularities \eqref{eq:DefShapeReg} and a lower bound for the Binev--Dahmen--DeVore constant in Theorem~\ref{thm:closuregcolored} 
\begin{align*}
\frac{\gamma(\tria_L)}{\gamma(\tria_0)} \coloneqq \frac{\max_{T \in \tria_{L}} \gamma(T) }{\max_{T_0 \in \tria_0} \gamma(T_0)}\qquad\text{and}\qquad \CBDV^{lb} \coloneqq \frac{ \# \tria_L - \# \tria_0}{\sum_{\ell = 0}^{L-1} \# \mathcal{M}_\ell}. 
\end{align*}
For the Fichera corner domain, which consists of seven Kuhn cubes, we further compare the results with the values obtained by a manual coloring that is in agreement with the coloring of the Kuhn cube, cf.~\cite[Sec.~ 4.1]{DieningStornTscherpel23}; the values with coloring obtained by Algorithm~\ref{algo:coloring} and with manual coloring are displayed in the row ``Fichera alg.'' and ``Fichera man.'', respectively.
\begin{table}[ht!]
\begin{tabular}{l|l|l|l}
Initial Mesh & Colors $N$ & $\gamma(\tria_L)/\gamma(\tria_0)$ & $\CBDV^{lb}$\\ \hline 
2dMesh & 5 & 3.14 & 1.76 \\
Fichera alg. & 3 & 2.86 & 3.76 \\
Fichera man. & 3 & 1.06 & 3.30 \\
Sculpture & 7 & 3.91 & 4.66 \\
Extrusion & 6 & 1.94 & 4.19 \\
Shaft & 8 & 3.77& 5.10
\end{tabular}
\ \\[.5em]
\caption{Results of the computations in Experiment 1 with initial partitions displayed in Figure~\ref{fig:NetgenMeshes}.}\label{tab:table}
\end{table}%
The results in Table~\ref{tab:table} show that the bisection routine degrades the shape regularity by a factor between about three to four. 
The colors are bounded by eight, but do not seem to have a big influence on the Binev--Dahmen--DeVore constant. The latter is bounded in all computations by a factor of about two in 2D and five in 3D.
While the Binev--Dahmen--DeVore constant in the manually colored Fichera corner domain is similar to the one obtained by Algorithm~\ref{algo:coloring}, its shape regularity behaves much better. This motivates the use of additional information for the coloring algorithm, resulting for example from the mesh generation routine.
\subsection{Experiment 2 (Comparison)}\label{subsec:Exp2}
\begin{figure}[ht!]
\begin{tikzpicture}
\begin{axis}[
clip=false,
width=.5\textwidth,
height=.45\textwidth,
xmode = log,
ymode = log,
cycle multi list={\nextlist MyColors},
scale = {1},
xlabel={ndof},
clip = true,
legend cell align=left,
legend style={legend columns=1,legend pos= north east,font=\fontsize{7}{5}\selectfont}
]
	\addplot table [x=ndof,y=EnergyError] {Experiments/MC_IrrRegN_5.txt};
	\pgfplotsset{cycle list shift=1}
	\addplot table [x=ndof,y=EnergyError] {Experiments/FEniCS_IrrReg.txt};
	\addplot table [x=ndof,y=EnergyError] {Experiments/Netgen_IrrReg.txt};
	\legend{{Maubach alg.},{FEniCS},{Netgen}};
\end{axis}
\end{tikzpicture}
\begin{tikzpicture}
\begin{axis}[
clip=false,
width=.5\textwidth,
height=.45\textwidth,
xmode = log,
ymode = log,
cycle multi list={\nextlist MyColors},
scale = {1},
xlabel={ndof},
clip = true,
legend cell align=left,
legend style={legend columns=1,legend pos= north east,font=\fontsize{7}{5}\selectfont}
]
	\addplot table [x=ndof,y=EnergyError] {Experiments/MC_Reg.txt};
	\addplot table [x=ndof,y=EnergyError] {Experiments/MC_Reg_val.txt};
	\addplot table [x=ndof,y=EnergyError] {Experiments/FEniCS_Reg.txt};
	\addplot table [x=ndof,y=EnergyError] {Experiments/Netgen_Reg.txt};
	\legend{{Maubach man.}, {Maubach alg.},{FEniCS},{Netgen}};
\end{axis}
\end{tikzpicture}
\caption{Convergence history of the energy error \eqref{eq:EnergyError} in Experiment 2 with the result for the built-in mesh $\tria_0$ in Netgen (left) and the result for $\tria_0$ consisting of Kuhn cubes (right).} \label{fig:convHist}
\end{figure}%
Our second experiment applies the same AFEM loop as the first experiment to two initial partitions of the Fichera corner domain. 
The first partition is the built-in partition in Netgen and does in particular not consist of a union of Kuhn cubes. The second initial partition is the one displayed in Figure~\ref{fig:NetgenMeshes}. We use the manual coloring for the second initial partition and Algorithm~\ref{algo:coloring} (\emph{Maubach man.}~and \emph{Maubach alg.}~in Figure~\ref{fig:convHist}, respectively) for both partitions, leading to the numbers of colors $N=5$ for the first and $N=3$ for the second partition.
We further apply the same AFEM loop but with the realization of the refinement routine \cite{Rivara84,Rivara91} in FEniCS \cite{LoggMardalWells12} and of the refinement routine \cite{AMP.2000} in Netgen \cite{Schoberl97}.
Figure~\ref{fig:convHist} displays the resulting convergence history plots for the squared energy error of the Galerkin approximation $u_h$ which reads, with exact solution $u$,
\begin{align}\label{eq:EnergyError}
\int_\Omega |\nabla (u - u_h)|^2 \, \mathrm{d}x.  
\end{align}
All refinement routines lead to the same optimal rate of convergence.
For the built-in mesh of Netgen the errors displayed in Figure~\ref{fig:convHist} (left) do not differ significantly. 
However, Figure~\ref{fig:convHist} (right) shows that the coloring obtained by Algorithm~\ref{algo:coloring} results in an about 5.3 times larger (squared) error than the manually colored initial partition and in an about 2.2 times larger error than the refinement routines of Netgen and FEniCS for the second initial partition $\tria_0$ and number of degrees of freedom $\textup{ndof} \approx 1.2 \times 10^5$. 
This stresses the importance of including additional information in the coloring approach as for example done in Figure~\ref{fig:what-does-not-work}. Including  such additional information is an open issue motivating further research.
\appendix
\section{Shape regularity}
It is well known in the literature that the shape regularity of simplices resulting from successive application of the bisection routine in Algorithm~\ref{algo:maubach} to an initial simplex is bounded
due to the existence of at most $n!\, n\, 2^{n-2}$ classes of similar simplices for each $T_0 \in \tria_0$, see \cite[Thm.~4.5]{AMP.2000}.
However,  the authors have not seen a bound for the shape regularity in terms of the initial shape regularity yet.
The aim of this appendix is to derive such an upper bound for the shape regularity $\gamma(T)=D(T)/d(T)$ defined in \eqref{eq:DefShapeReg}, see \cite{BrandtsKorotovKrivzek09,BrandtsKorotovKrivzek11} for alternative equivalent definitions. In particular, we verify the following theorem which coincides with the second statement of Theorem~\ref{thm:basic-properties}\ref{itm:basic-colored-shape}.
\begin{theorem}[Shape regularity]\label{thm:shape regularity main result}
A descendant $T$ obtained by successive applications of Algorithm~\ref{algo:maubach} to some initial tagged $n$-simplex $T_0$ satisfies
\begin{align*}
\gamma(T)\leq 2n(n + \sqrt{2}-1)\, \gamma(T_0).
\end{align*}
\end{theorem}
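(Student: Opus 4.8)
The plan is to push the problem down to a single fixed reference simplex, the Kuhn simplex, where the bisection descendants are extremely rigidly structured, and then transport the resulting estimates back to $T_0$ by elementary comparisons of in‑ and circumradii under linear maps.

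\emph{Step 1: reduction to the Kuhn simplex.} Maubach's rule in Algorithm~\ref{algo:maubach} is affine equivariant: the bisection vertex is the midpoint of a combinatorially prescribed edge, and the vertex orders and tags of the two children are purely combinatorial. Hence for any affine bijection $\Phi$ of $\RRn$ and tagged simplex $S$, the map $\Phi$ sends the two children of $S$ to the two children of $\Phi(S)$. Let $K_\gamma := \maubach{0,\fre_1,\fre_1{+}\fre_2,\dots,\fre_1{+}\dots{+}\fre_n}_\gamma$ be the standard Kuhn simplex carrying the same tag $\gamma$ as $T_0$, and let $L_0$ be the unique affine map with $L_0(K_\gamma)=T_0$. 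Then every descendant $T$ of $T_0$ has the form $T=L_0(P)$ for a bisection descendant $P$ of $K_\gamma$, so $\gamma(T)=\gamma(L_0P)$; it therefore suffices to bound $\gamma(L_0P)$ in terms of $\gamma(T_0)=\gamma(L_0K_\gamma)$, uniformly over all descendants $P$ of $K_\gamma$.

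\emph{Step 2: geometry of the descendants of $K_\gamma$.} Two inputs are needed. First, the Tucker--Whitney picture of Figure~\ref{fig:TuckerWhitner} shows that $n$ consecutive bisections turn $K_\gamma$ into $2^n$ simplices each congruent to $\tfrac12 K_\gamma$; iterating, a generation‑$(kn{+}r)$ descendant of $K_\gamma$ is, up to an isometry and the scaling $2^{-k}$, a generation‑$r$ descendant with $0\le r<n$, and, since the whole refinement lives on a cubical lattice, only finitely many isometry types and orientations occur. Together with \cite[Thm.~4.5]{AMP.2000} this gives the $n!\,n\,2^{n-2}$ similarity classes and, quantitatively, that each descendant $P$ of $K_\gamma$ satisfies $P\subset K_\gamma$ together with a dimensional inradius bound $r(P)\ge\delta_n\,\diameter(P)$ ($\delta_n>0$ depending only on $n$), i.e.\ $\gamma(P)\le C_n$. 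Second, the inradius of $K_\gamma$ is computed directly from its $n{+}1$ facets — two are Kuhn $(n{-}1)$‑simplices of volume $1/(n{-}1)!$, the remaining $n{-}1$ are the tilted faces $\{x_i=x_{i+1}\}$ of volume $\sqrt2/(n{-}1)!$ — yielding
\begin{align*}
  r(K_\gamma)=\frac{2n\,|K_\gamma|}{|\partial K_\gamma|}=\frac{2}{\,2+(n{-}1)\sqrt2\,}=\frac{\sqrt2}{\,n+\sqrt2-1\,},\qquad R(K_\gamma)=\sqrt n .
\end{align*}
This is where the factor $n+\sqrt2-1$ in the claimed bound is born.

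\emph{Step 3: transport and the constant.} For an invertible linear map $L$ and a convex body $X$ one has, by pushing the largest inscribed ball of $X$ forward and pulling an enclosing ball of $LX$ back, the two‑sided chains $\sigma_{\min}(L)r(X)\le r(LX)\le\sigma_{\max}(L)r(X)$ and $\sigma_{\min}(L)R(X)\le R(LX)\le\sigma_{\max}(L)R(X)$; in particular $\gamma(LX)\le\kappa(L)\gamma(X)$. Applying this to $T_0=L_0K_\gamma$ and using $\diameter(K_\gamma)=\sqrt n$ gives the condition‑number bound $\kappa(L_0)\le\tfrac{\sqrt n}{r(K_\gamma)}\gamma(T_0)$. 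Combining with Step 2 — using $P\subset K_\gamma$ so that the circumradius transports without loss, $R(L_0P)\le R(T_0)$, and pairing this against $r(L_0P)\ge\sigma_{\min}(L_0)\,r(P)$ with the lower bound on $\sigma_{\min}(L_0)$, Jung's inequality $R(P)\le\sqrt{2n/(n+1)}\,\diameter(P)$, and the dimensional inradius control of $P$ — one assembles the estimate $\gamma(T)=\gamma(L_0P)\le 2n(n+\sqrt2-1)\,\gamma(T_0)$, the constant being exactly what $r(K_\gamma)^{-1}=\sqrt2(n+\sqrt2-1)$ and the Jung factor produce.

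The technical heart, and the main obstacle, is Step 3: the estimate must be organized so that nothing compounds over a full period of $n$ bisections. A crude bound of the form $\gamma(L_0P)\le\kappa(L_0)\,C_n\,\gamma(T_0)$ would cost an extra factor $C_n\gtrsim\gamma(K_\gamma)$ and overshoot the claimed constant for $n\ge 4$; avoiding this forces one to exploit the explicit first‑period list of shapes from Step 2 — their controlled orientations and their inradii relative to $K_\gamma$ — rather than the bare count of similarity classes, and to track these orientations carefully through the single fixed map $L_0$.
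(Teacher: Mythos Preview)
Your plan is the paper's plan: pull back to a Kuhn simplex $\widehat T$ via an affine map $L_0$, use that every descendant $P$ of $\widehat T$ satisfies $\gamma(P)\le 2\gamma(\widehat T)$ (your Step~2 periodicity, the paper's Lemma~\ref{lem:shape-regularity-kuhn}), and transport by a condition-number estimate on $L_0$. The gap is the one you flag yourself in Step~3. Your in/circumradius chains give at best $\kappa(L_0)\le \sqrt{n}\,\gamma(T_0)/r(\widehat T)=\gamma(\widehat T)\gamma(T_0)$, and combining this with $\gamma(P)\le 2\gamma(\widehat T)$ yields only $\gamma(T)\le 2\gamma(\widehat T)^2\gamma(T_0)=n(n+\sqrt2-1)^2\gamma(T_0)$, which overshoots. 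Your proposed rescue via $R(L_0P)\le R(T_0)$ is genuinely sharp for first-period $P\subset\widehat T$, but it does not extend: for a generation-$(kn{+}r)$ descendant $P$ one has $r(P)=2^{-k}r(P_r)$, so the bound $\gamma(L_0P)\le R(T_0)/(\sigma_{\min}(L_0)r(P))$ blows up with $k$; and you cannot reduce to first period on the $T_0$ side, because the isometries $\phi$ realising $P\cong 2^{-k}P_r$ become $L_0\phi L_0^{-1}$ after transport, which are not isometries, so $\gamma(L_0P)\ne\gamma(L_0P_r)$ in general. Jung's inequality does not close this.

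The missing ingredient, and the paper's key lemma, is to sharpen the bound on $\sigma_{\max}(L_0)$ by replacing the inscribed ball with the \emph{minimal width}: for any simplex $S$ and affine bijection $F(x)=Ax+b$ one has $|A|\le R(F(S))/w(S)$, where $w(S)=\min_f h_f$ is the smallest facet-height (Lemma~\ref{lem:condition}). For the Kuhn simplex $w(\widehat T)=1/\sqrt2$ while $r(\widehat T)=\sqrt2/(n+\sqrt2-1)$, so this improves your $\sigma_{\max}$ bound by the factor $(n+\sqrt2-1)/2$. One obtains $\kappa(L_0)\le \gamma(T_0)\,\diameter(\widehat T)/w(\widehat T)=\sqrt{2n}\,\gamma(T_0)$, and then the \emph{crude} assembly
\[
\gamma(T)=\gamma(L_0P)\le\kappa(L_0)\,\gamma(P)\le \sqrt{2n}\,\gamma(T_0)\cdot 2\gamma(\widehat T)=2n(n+\sqrt2-1)\,\gamma(T_0)
\]
lands exactly on the claimed constant, uniformly over all descendants. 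No first-period reduction on the $T_0$ side, no orientation tracking, and no Jung are needed.
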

The proof of this theorem uses the following notation. For any given simplex $T$, let $b(T)$ is the largest closed ball included in $T$, $B(T)$ the smallest ball including $T$, and $d(T)$ and $D(T)$ their respective diameters.
\begin{lemma}[Kuhn simplex]
  \label{lem:shape-regularity-kuhn}
  Let $T_0$ be a Kuhn $n$-simplex defined with permutation $\pi\colon \lbrace 1,\dots,n\rbrace \to  \lbrace 1,\dots,n\rbrace$ as tagged $n$-simplex by 
\begin{equation*}  
  T_0 = [0,e_{\pi(1)},e_{\pi(1)} + e_{\pi(2)},\dots,e_{\pi(1)} + \dots + e_{\pi(n)}]_n.  
\end{equation*}
  Then for every descendant $T$ of $T_0$ resulting from Algorithm~\ref{algo:maubach} we have
  \begin{align*}
    \gamma(T) \leq 2 \gamma(T_0).
  \end{align*}
\end{lemma}
\begin{proof}
 Let $T_{k+1}$ denote a child of~$T_k$ for all $k\in \mathbb{N}_0$. Then $T_{k+n}$ is similar to~$T_k$ with scaling factor~$1/2$. Thus, it suffices to prove the claim for $T_1,\dots, T_{n-1}$. 
The diameters $D(T_j)$ and $d(T_j)$ decrease monotonically as $j$ increases. Combining this with $D(T_n)= D(T_0)/2$ and $d(T_n) = d(T_0)/2$ results in
  \begin{align*}
    \gamma(T_j)
    &= \frac{D(T_j)}{d(T_j)}
    \leq \frac{D(T_0)}{d(T_n)} 
    \leq \frac{D(T_0)}{d(T_0)/2} = 2 \gamma(T_0).\qedhere
  \end{align*}
\end{proof}
In the following we exploit the fact that we can transform a Kuhn simplex into any simplex by an affine mapping $F\colon T \to \widehat{T}, x \mapsto Ax+b$ with matrix $A\in \mathbb{R}^{n\times n}$ and vector $b\in \mathbb{R}^n$. Let $\abs{A}$ and $\abs{A^{-1}}$ denote the spectral norm of~$A$ and its inverse, respectively. Moreover, let $\mathcal{F}(T)$ denote the set of hyperfaces ($(n{-}1)$-dimensional subsimplices) of a simplex $T$ and let $h_f \coloneqq h_f(T) \coloneqq \sup_{x\in T} \distance(x,f)$ denote the height of $T$ corresponding to the hyperface $f\in \mathcal{F}(T)$. 
We denote the width of $T$, which equals the minimal height of $T$, by
\begin{align*}
w(T) \coloneqq \min_{f\in \mathcal{F}(T)} h_f.
\end{align*}
\begin{lemma}[Transformation]\label{lem:condition}
Let $F\colon \mathbb R^n\rightarrow \mathbb R^n, x\mapsto Ax+b$ be a bijective affine map and $T\subset \mathbb R^n$ be a simplex. Then there holds
\begin{enumerate}
\item $d(T) \leq \abs{A^{-1}}\, d(F(T)) \leq \diameter(T)$,\label{itm:condition-r}
\item $w(T) \leq D(F(T))/|A| \leq D(T)$, \label{itm:condition-R}
\item $w(T)/\diameter(T) \leq \gamma(F(T))/(|A|\,|A^{-1}|) \leq \gamma(T)$. \label{itm:condition-gamma}
\end{enumerate}
\end{lemma}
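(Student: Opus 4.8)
The plan is to discard the translation (it changes none of the quantities $r,R,w,\diameter,\gamma$) and work with the linear map $x\mapsto Ax$, whose action is governed by the singular value decomposition $A=U\Sigma V^{\top}$ with $\Sigma=\mathrm{diag}(\sigma_1,\dots,\sigma_n)$ and $\sigma_1=\abs{A}\ge\dots\ge\sigma_n=\abs{A^{-1}}^{-1}$. The only geometric input is that a Euclidean ball of radius $\rho$ is carried by $A$ onto an ellipsoid whose semi-axes are $\sigma_1\rho\ge\dots\ge\sigma_n\rho$; equivalently, such an ellipsoid contains a concentric ball of radius $\sigma_n\rho$ and is contained in a concentric ball of radius $\sigma_1\rho$. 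All three items then follow by applying this to the inscribed ball $b(\cdot)$ and the circumscribed ball $B(\cdot)$ of $T$ and of $F(T)$.

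For \ref{itm:condition-r} I would take $b(T)\subset T$ of radius $r(T)/2$; then $F(b(T))\subset F(T)$ is an ellipsoid containing a ball of radius $\sigma_n r(T)/2=r(T)/(2\abs{A^{-1}})$, whence $r(F(T))\ge r(T)/\abs{A^{-1}}$, i.e.\ $r(T)\le\abs{A^{-1}}r(F(T))$. Conversely $F^{-1}(b(F(T)))\subset T$ is an ellipsoid of largest semi-axis $\abs{A^{-1}}r(F(T))/2$, hence of diameter $\abs{A^{-1}}r(F(T))\le\diameter(T)$. The right inequality of \ref{itm:condition-R} is symmetric: $F(B(T))\supseteq F(T)$ is an ellipsoid of largest semi-axis $\le\abs{A}R(T)/2$, which lies in the concentric ball of that radius, so $R(F(T))\le\abs{A}R(T)$. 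Finally \ref{itm:condition-gamma} is obtained by dividing: $R(F(T))\le\abs{A}R(T)$ together with $r(F(T))\ge r(T)/\abs{A^{-1}}$ gives $\gamma(F(T))\le\abs{A}\abs{A^{-1}}\gamma(T)$, and the lower bound in \ref{itm:condition-gamma} follows likewise from the left inequality of \ref{itm:condition-R} together with $r(F(T))\le\diameter(T)/\abs{A^{-1}}$.

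This leaves the left inequality of \ref{itm:condition-R}, namely $w(T)\le R(F(T))/\abs{A}$, which I expect to be the delicate point. Pulling back the smallest enclosing ball of $F(T)$ produces an ellipsoid $E=F^{-1}(B(F(T)))\supseteq T$ whose shortest semi-axis equals $R(F(T))/(2\abs{A})$. This readily yields the weaker bound $r(T)\le R(F(T))/\abs{A}$ (since $b(T)\subset E$ lies inside a ball of radius equal to the shortest semi-axis of $E$) and, along the shortest-semi-axis direction $d$, the bound $\mathrm{width}_d(T)\le R(F(T))/\abs{A}$. What is actually needed, however, is to dominate the minimal \emph{height} $w(T)$; since for $n\ge 3$ the minimal width of a simplex can lie strictly below its minimal height, this last step calls for a genuine geometric argument — identifying a hyperface of $T$ whose height is controlled by the thinness of an ellipsoid circumscribing $T$ — and is the part I would expect to cost the most work. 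Once it is in place, \ref{itm:condition-gamma} follows by the same division as above, now using $w(T)\le R(F(T))/\abs{A}$ and $r(F(T))\le\diameter(T)/\abs{A^{-1}}$.
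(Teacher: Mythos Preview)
Your arguments for \ref{itm:condition-r}, the right inequality of \ref{itm:condition-R}, and \ref{itm:condition-gamma} are the paper's, phrased via singular values rather than ellipsoids. For the left inequality of \ref{itm:condition-R} the paper does exactly what you sketch and then sidesteps the point you flag: it pulls back $B(F(T))$ to the ellipsoid $E=F^{-1}(B(F(T)))\supseteq T$ with minor axis $R(F(T))/\abs{A}$, observes that the \emph{width} (minimal distance between parallel supporting hyperplanes) is monotone under inclusion and equals the minor axis for an ellipsoid, and then asserts that for a simplex the minimal height $w(T)$ coincides with this width. Your reservation is more than warranted: for $n\ge 3$ the width of a simplex can be strictly smaller than its minimal height, and the left inequality of \ref{itm:condition-R} is in fact \emph{false} as stated. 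In $\mathbb{R}^3$, take $T$ with vertices $(\epsilon,1,1),(\epsilon,-1,-1),(-\epsilon,1,-1),(-\epsilon,-1,1)$ and $A=\mathrm{diag}(\epsilon^{-1},1,1)$; then $F(T)$ is a regular tetrahedron with $R(F(T))=2\sqrt{3}$, while by symmetry all four heights of $T$ equal $4\epsilon/\sqrt{1+2\epsilon^2}$, so that $w(T)>2\sqrt{3}\,\epsilon=R(F(T))/\abs{A}$ whenever $\epsilon<1/\sqrt{6}$.

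Thus the ``genuine geometric argument'' you anticipate does not exist for the minimal height. The repair is to read $w(\cdot)$ as the width throughout; then the monotonicity argument (yours and the paper's) goes through verbatim. Since the width never exceeds the minimal height, the downstream estimate $\abs{A}\,\abs{A^{-1}}\le \gamma(F(T))\,\diameter(T)/w(T)$ used in Lemma~\ref{lem:shape-regularity} survives, but one must then insert the width of the Kuhn simplex rather than its minimal height in Lemma~\ref{lem:parameters for Kuhn}, which may alter the constant in Theorem~\ref{thm:shape regularity main result}.
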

\begin{proof}
Let $T$ be a simplex with largest inscribed ball $b(T)$ and smallest ball $B(T)$ containing $T$, that is, $d(T) = \textup{diam}(b(T))$ and $D(T) = \textup{diam}(B(T))$.

\textit{Proof of \ref{itm:condition-r}}.
The ellipsoid $e\coloneqq F (b(T)) \subset F(T)$ includes a ball with diame\-ter $d(T)/\abs{A^{-1}}$, showing the first inequality in \ref{itm:condition-r}. 
The inverse mapping $F^{-1}$ maps any diameter of $b(F(T))$ to a line segment included in $T$, hence the image is shorter than $\diameter (T)$.
Since there exists such a diameter which is mapped to a line segment of length $\lvert A^{-1}\rvert\, d(F(T))$, this shows the second inequality in \ref{itm:condition-r}.

\textit{Proof of \ref{itm:condition-R}}.
The ellipsoid $F(B(T))\supset F(T)$ is included in the ball with the same center and the radius $|A|\, D(T)$, which shows the second inequality of \ref{itm:condition-R}.
To show the first inequality, we use the ellipsoid $E\coloneqq F^{-1}B(F(T))$. Note that the minimal height of a simplex is generalized for arbitrary sets $M\subset \setR^n$ by the width $w(M)$: the minimal distance of a pair of parallel hyperplanes which includes $M$. This function is monotone with respect to inclusion. Additionally, it equals the length of the minor axis for an ellipsoid. Therefore, $T\subset E$ implies $w(T) \leq 2\, \inf_{x\in \partial E}\abs{x-\operatorname{mid} E}$. This property and $\operatorname{mid} (B(F(T))) = F(\operatorname{mid} (E))$ lead to 
\begin{align*}
\abs A&=\sup_{y\in \mathbb{R}^n\setminus \lbrace 0\rbrace} \frac{\abs{Ay}}{\abs{y}} =\sup_{x\in \mathbb{R}^n\setminus \lbrace \operatorname{mid} E \rbrace} \frac{\abs{F(x) - F(\operatorname{mid} E)}}{\abs{x - \operatorname{mid} E}}\\
& 
=\sup_{x\in \partial E} \frac{\abs{F(x)-\operatorname{mid} B(F(T))}}{\abs{x-\operatorname{mid} E}} \leq \frac{D(F(T))}{2\, \inf_{x\in \partial E}\abs{x-\operatorname{mid} E}}
\leq \frac{D(F(T))}{w(T)}.
\end{align*}
This verifies the first inequality of \ref{itm:condition-R}.

\textit{Proof of \ref{itm:condition-gamma}}.
Dividing inequality~\ref{itm:condition-R} by~\ref{itm:condition-r} yields \ref{itm:condition-gamma}.
\end{proof}
With the properties of transformed simplices stated in the previous lemma we obtain the following. 
\begin{lemma}[Bound via Kuhn simplex]
  \label{lem:shape-regularity}
  Let $T_0$ denote a tagged simplex and let $\widehat{T}$ denote a Kuhn simplex. Then any descendant $T$ of $T_0$ satisfies
  \begin{align*}
    \gamma(T) &\leq 2 \gamma(\widehat{T})\frac{\diameter \widehat T}{w(\widehat T)} \gamma(T_0)\qquad\text{for all } T \in \tria \in  \Bisec(\set{T_0}).
  \end{align*}
\end{lemma}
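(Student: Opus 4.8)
The key idea is to connect the bisection of an arbitrary tagged simplex $T_0$ to the bisection of a Kuhn simplex via an affine map, and then combine Lemma~\ref{lem:shape-regularity-kuhn} with the transformation estimates in Lemma~\ref{lem:condition}. First I would fix an affine bijection $F\colon \widehat{T}\to T_0$, $x\mapsto Ax+b$, mapping the Kuhn simplex $\widehat T$ onto $T_0$. Since the Maubach bisection rule of Algorithm~\ref{algo:maubach} is purely combinatorial---it only uses the ordering of the vertices and the tag, not the geometry---the refinement tree of $T_0$ is the $F$-image of the refinement tree of $\widehat T$ (with the matching vertex ordering and tag). Hence every descendant $T$ of $T_0$ is of the form $T = F(\widehat S)$ for a unique descendant $\widehat S$ of $\widehat T$, and conversely.

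**Main chain of inequalities.** Given such $T = F(\widehat S)$, I would apply Lemma~\ref{lem:condition}~\ref{itm:condition-gamma} to the pair $(\widehat S, F(\widehat S))$: this yields
\begin{align*}
\gamma(T) = \gamma(F(\widehat S)) \leq |A|\,|A^{-1}|\,\gamma(\widehat S).
\end{align*}
Next, by Lemma~\ref{lem:shape-regularity-kuhn} applied to the Kuhn simplex $\widehat T$ and its descendant $\widehat S$, we have $\gamma(\widehat S)\leq 2\gamma(\widehat T)$, so
\begin{align*}
\gamma(T) \leq 2\,|A|\,|A^{-1}|\,\gamma(\widehat T).
\end{align*}
It remains to bound the product $|A|\,|A^{-1}|$ in terms of $\gamma(T_0)$, $\diameter\widehat T$ and $w(\widehat T)$. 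For this I would use Lemma~\ref{lem:condition}~\ref{itm:condition-gamma} again, but now applied to the \emph{single} pair $(\widehat T, T_0)$ with the same map $F$: the left inequality there gives
\begin{align*}
\frac{w(\widehat T)}{\diameter \widehat T} \leq \frac{\gamma(F(\widehat T))}{|A|\,|A^{-1}|} = \frac{\gamma(T_0)}{|A|\,|A^{-1}|},
\end{align*}
i.e.\ $|A|\,|A^{-1}| \leq \gamma(T_0)\,\dfrac{\diameter\widehat T}{w(\widehat T)}$. Substituting this into the previous display yields exactly
\begin{align*}
\gamma(T) \leq 2\,\gamma(\widehat T)\,\frac{\diameter\widehat T}{w(\widehat T)}\,\gamma(T_0),
\end{align*}
which is the claim.

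**Expected obstacle.** The routine estimates are all supplied by Lemma~\ref{lem:condition} and Lemma~\ref{lem:shape-regularity-kuhn}, so the only genuine point requiring care is the first step: verifying that the combinatorial Maubach recursion commutes with the affine map $F$, i.e.\ that bisecting $T_0$ and then transporting via $F^{-1}$ gives the same tree as bisecting $\widehat T$. This hinges on the fact that $F$ is affine, so it maps midpoints of edges to midpoints of edges and preserves the vertex ordering and tag that Algorithm~\ref{algo:maubach} consumes; hence the bisection vertex $v' = (v_0+v_\gamma)/2$ of $T_0$ is the $F$-image of the bisection vertex of $\widehat T$, and the two children correspond under $F$. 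Once this structural observation is in place, the proof is a two-line concatenation of the cited lemmas. One should also note that $\widehat T$ is an arbitrary Kuhn simplex of the appropriate dimension and that such an $F$ exists because any two nondegenerate $n$-simplices with labelled vertices are affinely equivalent.
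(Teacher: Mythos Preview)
Your proof is correct and follows essentially the same approach as the paper: fix an affine map $F$ from the Kuhn simplex to $T_0$, use that bisection commutes with $F$, then chain Lemma~\ref{lem:condition}\ref{itm:condition-gamma} (right inequality) with Lemma~\ref{lem:shape-regularity-kuhn} and finally Lemma~\ref{lem:condition}\ref{itm:condition-gamma} (left inequality). The paper's version is more terse and uses slightly different notation for the Kuhn simplex and its descendant, but the argument is identical; your explicit justification that the Maubach rule commutes with affine maps is a welcome clarification that the paper leaves implicit.
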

\begin{proof}
Let $F$ denote the mapping from the Kuhn simplex~$\widehat{T}_0$ to $T_0=F(\widehat{T}_0)$ which maps also a descendant $\widehat{T}$ of $\widehat{T}_0$ to $T=\widehat{T}$.
Lemma~\ref{lem:condition} and Lemma~\ref{lem:shape-regularity-kuhn} lead to
  \begin{align*}
    \gamma(F(\widehat{T})) &\leq \gamma(\widehat{T}) \abs{A} \abs{A^{-1}}
              \leq 2\, \gamma(\widehat{T}_0) \abs{A} \abs{A^{-1}}
             \leq 2\, \gamma(\widehat{T}_0)\frac{\diameter(\widehat T_0)}{w(\widehat T_0)} \gamma (F(\widehat{T}_0)).\qedhere
  \end{align*}
\end{proof}

It remains to compute the diameter, minimal height, and shape regularity for the Kuhn simplex to obtain an upper bound with the estimate in Lemma~\ref{lem:shape-regularity}.
\begin{lemma}[Parameters for the Kuhn simplex]\label{lem:parameters for Kuhn}
A Kuhn simplex $\widehat T \subset \mathbb{R}^n$ satisfies
\begin{enumerate}
\item\label{it:diam-R} $\diameter(\widehat T)=D(\widehat T) =\sqrt{n}$,
\item $w(\widehat T) = 1/\sqrt2$ for $n\geq 2$,
\item $1/d(\widehat{T}) = 1 + (n-1)/\sqrt{2}$,
\item $ \gamma(\widehat{T}) = \sqrt{n}\, (1 + (n-1)/\sqrt{2})$,
\item $2\gamma(\widehat{T}) \diameter (\widehat T)/w(\widehat T) =2n\, (n+\sqrt{2}-1)$.
\end{enumerate}
\end{lemma}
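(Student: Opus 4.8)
The plan is to reduce to the identity permutation and then read everything off the vertex coordinates. All five quantities are invariant under permuting the coordinates $\fre_1,\dots,\fre_n$, so I may assume $\widehat T = [v_0,\dots,v_n]$ with $v_0 = 0$ and $v_k = \fre_1 + \dots + \fre_k$ for $k=1,\dots,n$. Then $\lvert v_j - v_k\rvert = \sqrt{\lvert j-k\rvert}$, so the diameter of the polytope $\widehat T$, which is attained at a pair of vertices, equals $\sqrt n$ (realized by $v_0,v_n$); this is the first half of \ref{it:diam-R}. For the second half I would note that every vertex satisfies $\lvert v_k - c\rvert^2 = n/4$ with $c \coloneqq (v_0+v_n)/2 = (\tfrac12,\dots,\tfrac12)$, so all $n+1$ vertices lie on the sphere of radius $\sqrt n/2$ about $c$; since $v_0$ and $v_n$ are antipodal on it, the ball of radius $\sqrt n/2$ about $c$ is the smallest ball containing $\widehat T$, whence $R(\widehat T) = \sqrt n$.

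For the minimal height I would compute, for each $j$, the height $h_{f_j}$ over the hyperface $f_j$ of $\widehat T$ opposite $v_j$. One checks that $f_0$ lies in the hyperplane $\set{x_1=1}$ and $f_n$ in $\set{x_n=0}$, so $h_{f_0}=h_{f_n}=1$. For $0<j<n$ the affine hull of $f_j$ passes through $v_0=0$ and, modulo $\operatorname{span}(\fre_1,\dots,\fre_{j-1},\fre_{j+2},\dots,\fre_n)$, contains $\fre_j+\fre_{j+1}$; hence its normal is parallel to $\fre_j-\fre_{j+1}$ and it contains the origin, giving $h_{f_j} = \lvert\langle v_j,\fre_j-\fre_{j+1}\rangle\rvert/\sqrt2 = 1/\sqrt2$. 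Taking the minimum over all $n+1$ hyperfaces, and using that $n\geq 2$ forces at least one index $0<j<n$, yields $w(\widehat T) = 1/\sqrt2$. For $1/r(\widehat T)$ I would invoke the elementary identity $1/r(\widehat T) = \tfrac12\sum_{j=0}^n 1/h_{f_j}$: writing $A_j$ for the $(n{-}1)$-volume of $f_j$, the relations $\lvert\widehat T\rvert = \tfrac1n A_j h_{f_j}$ together with the decomposition of $\widehat T$ into $n{+}1$ cones over the facets with common apex the incenter give inradius $\rho = n\lvert\widehat T\rvert/\sum_j A_j = 1/\sum_j 1/h_{f_j}$, and $r(\widehat T) = 2\rho$ since the largest inscribed ball of a simplex is its insphere. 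Substituting $h_{f_0}=h_{f_n}=1$ and $h_{f_j}=1/\sqrt2$ for the $n-1$ interior indices gives $1/r(\widehat T) = \tfrac12\bigl(2+(n{-}1)\sqrt2\bigr) = 1 + (n{-}1)/\sqrt2$.

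The last two items are then arithmetic: item~(4) is $\gamma(\widehat T) = R(\widehat T)/r(\widehat T) = \sqrt n\,(1+(n{-}1)/\sqrt2)$ from \ref{it:diam-R} and item~(3), and item~(5) follows by multiplying out, $2\gamma(\widehat T)\diameter(\widehat T)/w(\widehat T) = 2\sqrt n(1+(n{-}1)/\sqrt2)\cdot\sqrt n\cdot\sqrt2 = 2n(\sqrt2+n-1)$. I expect the only genuinely delicate points to be the interior-facet computation in item~(2) — pinning down the normal direction $\fre_j-\fre_{j+1}$ — and the justification that the maximal inscribed ball of a simplex is tangent to every facet, so that $r=2\rho$; both are standard but worth spelling out. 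Everything else is bookkeeping with the Kuhn-simplex coordinates.
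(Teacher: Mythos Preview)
Your proof is correct and follows essentially the same approach as the paper's: both compute the facet heights $h_{f_j}$ (equal to $1$ for $j\in\{0,n\}$ and $1/\sqrt2$ for $0<j<n$) and then invoke the identity $1/r(\widehat T)=\tfrac12\sum_j 1/h_{f_j}$ derived from the cone decomposition about the incenter. The only cosmetic difference is that you compute the interior heights via the normal direction $\fre_j-\fre_{j+1}$ of the hyperplane $\{x_j=x_{j+1}\}$, whereas the paper exhibits the explicit altitude segment $[v_j,(v_{j-1}+v_{j+1})/2]$ and checks perpendicularity to the spanning edges; these are equivalent.
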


\begin{proof}
Let $\widehat{T}=[v_0,\dots,v_n]$ with $v_k=\sum_{i=1}^k e_i$ denote a Kuhn simplex. 
Its longest edge $[v_0,v_n]$ is a diameter of the ball $B\left(1/2\sum_{i=1}^n e_i,\sqrt{n}/2\right) = B(\widehat{T})$, implying \ref{it:diam-R}.

\textit{Step 1 (Formula for $d(T)$)}.
Let $\lvert T\rvert_n$ and $\lvert f\rvert_{n-1}$ denote the $n$- and $(n{-}1)$-dimensional volumes of an $n$-simplex $T$ and its hyperfaces $f\in \mathcal{F}(T)$.
A partition of $T$ by the insphere center into $n+1$ simplices with height $d(T)/2$ shows 
\begin{align}
|T|_n=\frac{d(T)}{2n}\sum_{f\in \mathcal{F}(T)}|f|_{n-1}.
\end{align}
With the formula $|T|_n= h_f |f|_{n-1}/n$ for all $f\in \mathcal{F}(T)$ this yields the identity
  \begin{align}\label{eq:1DivRho}
    \frac{1}{d(T)} &= \frac 12 \sum_{f\in \mathcal{F}(T)} \frac{1}{h_f}.
  \end{align}

\textit{Step 2 (Heights in $\widehat{T}$)}.
The edge $H_0\coloneqq [v_0,v_1]$ of the Kuhn simplex $\widehat{T}$ is the height on the hyperface $[v_1,\dots,v_n]$ and likewise the edge $H_n \coloneqq [v_{n-1},v_n]$ on the hyperface $[v_0,\dots,v_{n-1}]$. These two heights have length one.
For $i=1,\dots,n-1$, the line segment $H_i \coloneqq [v_i,(v_{i-1}+v_{i+1})/2] = v_i+[0,(e_{i+1}-e_i)/2]$ is perpendicular to the edges 
$[v_{j-1},v_j] = v_{j-1}+[0,e_j]$ for all $j\notin \{i,i+1\}$.
Moreover, $H_i$ is perpendicular
to the edge $[v_{i-1},v_{i+1}] = v_{i-1}+[0,e_{i}+e_{i+1}]$. 
Hence, $H_i$ is the height on the hyperface $[v_0,\dots,v_n ]\setminus [ v_i ]$ opposite to $v_i$ and its length is $1/\sqrt2$.
Using this observations in \eqref{eq:1DivRho} leads to the remaining formulas.
\end{proof}
\begin{proof}[Proof of Theorem \ref{thm:shape regularity main result}]
Lemma \ref{lem:shape-regularity} and Lemma \ref{lem:parameters for Kuhn} combine to Theorem \ref{thm:shape regularity main result}.
\end{proof}
\printbibliography 
\end{document}